\newtheorem{theorem}{Theorem}[section]
\newtheorem{lemma}[theorem]{Lemma}
\newtheorem{cor}[theorem]{Corollary}
\theoremstyle{definition}
\newtheorem{remark}[theorem]{Remark}
\newcommand{\f}[2]{\frac{#1}{#2}}
\newcommand{\Om}{\Omega}
\newcommand{\Ombar}{\overline{\Om}}
\newcommand{\delny}{∂_{ν}}
\newcommand{\amrand}{\big|_{∂\Om}}
\newcommand{\bdryzT}[1][(0,T)]{\big|_{∂\Om\times#1}}
\newcommand{\kl}[1]{\left(#1\right)}
\newcommand{\ddt}{\frac{d}{dt}}
\newcommand{\io}{\int_{\Om}}
\newcommand{\sub}{\subset}
\newcommand{\Lom}[1]{L^{#1}(\Om)}
\newcommand{\Liom}{L^{\infty}(\Om)}
\newcommand{\Tmax}{T_{max}}
\newcommand{\norm}[2][]{\|#2\|_{#1}}
\newcommand{\nn}{\nonumber}
\newcommand{\na}{\nabla}
\date{}
\begin{document}
 \title{Boundedness of solutions to a virus infection model with saturated chemotaxis}
\author{
Bingran Hu\footnote{hubingran@foxmail.com}\\
{\small School of Information and Technology, Dong Hua University,}\\
{\small 200051 Shanghai, P. R. China }
\and
Johannes Lankeit\footnote{jlankeit@math.uni-paderborn.de}\\
{\small Institut f\"ur Mathematik, Universit\"at Paderborn,}\\
{\small 33098 Paderborn, Germany}
 }
%
%
%
\maketitle

\begin{abstract}
\noindent
 We show global existence and boundedness of classical solutions to a virus infection model with chemotaxis in bounded smooth domains of arbitrary dimension and for any sufficiently regular nonnegative initial data and homogeneous Neumann boundary conditions. 
 More precisely, the system considered is 
 \begin{align*}
  u_t&=Δu - ∇\cdot(\f{u}{(1+u)^{α}}∇v) - uw + κ - u, \\
  v_t&=Δv + uw - v,\\
  w_t&=Δw - w + v,
 \end{align*}
with $\kappa\ge 0$, and solvability and boundedness of the solution are shown under the condition that 
\begin{equation*}
\begin{cases}
\alpha > \f12 + \f{n^2}{6n+4}, &\text{if } \quad 1 \leq n \leq 4 \\
\alpha > \f{n}4, &\text{if } \quad n \geq 5.
\end{cases}
\end{equation*}
\noindent
  \textbf{Key words:} boundedness, classical solvability, chemotaxis, virus infection model\\
  \textbf{Math Subject Classification (2010):} 35Q92, 35K57, 35A01, 35A09, 92C17
\end{abstract}

\section{Introduction}
In theoretical immunology 
 it is not uncommon to model the evolution of a virus population by a system of ODEs (\cite{book_nowak,hethcote}). These models already yield key 
 insights into infections (\cite{book_nowak}; for clinical advice based on models of this type, see e.g. \cite{bonhoeffer_MSN}), but by their very nature are ill-suited for gaining spatial information concerning the distribution of infected cells. 
For this reason in an attempt to better understand the formation of patterns 
on the onset of an HIV infection, in \cite{stancevic_amh} 
the following model was proposed (where $κ$, $α$, $β$, $d_{χ}$, $d_v$ and $d_w$ are suitable positive constants): 
\begin{align}\label{samh:system}
 u_t&=\Delta u - d_{χ}\nabla \cdot(u\nabla v) + κ - (κ-1)uw-u,\\
 v_t&=d_v\Delta v+α(uw-v),\nn\\
 w_t&=d_w\Delta w +β(v-w).\nn
\end{align}
Herein, $u$ and $v$ stand for the population density of uninfected and infected cells, respectively, and $w$ is used to describe the concentration of virus particles. 
All three of the populations move around randomly (i.e. diffuse) and decay. The virus is also produced by infected cells and its presence causes healthy cells to be converted into infected cells. Healthy cells are, moreover, produced with a constant rate $κ$.
In addition, in chemotactic response to cytokines emitted by infected cells healthy cells move toward high concentration of those. 
The corresponding cross-diffusive term in \eqref{samh:system} is the key contributor to mathematical challenges already the global existence analysis of \eqref{samh:system} poses. \\

In contrast to the aggregation phenomena described by the famous classical Keller--Segel type models (\cite{keller_segel_70}, see also the surveys \cite{BBTW,horstmann,hillen_painter}), in the present setting a blow-up of solutions is not to be expected according to the biological observations. Motivated from the desire to hence exclude the possibility of blow-up, in \cite[Sec. 8]{stancevic_amh}, the chemotaxis term was substituted by a term essentially of the form of $\na\cdot(\f{u}{1+u} \na v)$. In line with this reasoning, it is the purpose of this article to investigate whether weaker 
changes can have a similar consequence: If we employ chemotaxis terms of the form $\na\cdot(\f{u}{(1+u)^{α}} \na v)$, can we still guarantee (global existence and) boundedness of solutions? More accurately: for which values of $α$ is it possible? \\

It seems appropriate to note that weakening the cross-diffusion is not the only possible change to \eqref{samh:system} that can ensure global existence and boundedness of solutions:
 
In \cite{tao_bellomo}, Bellomo and Tao replaced the conversion term $uw$ by the term $\f{uw}{1+au+bw}$ of Beddington--deAngelis type (\cite{beddington,deAngelis_et_al}) with positive parameters $a$, $b$, and succeeded in proving global existence and boundedness of solutions to the resulting model, as well as their stabilization as $t\to \infty$ for small basic reproduction numbers. For a closely related system, see also \cite{wang_ma_lai}.\\




From a mathematical perspective, one of the most significant differences between \eqref{samh:system} and the well-studied Keller--Segel type models is the presence of a nonlinear production term ($+uw$ in the second equation). While also chemotaxis--consumption models (see e.g. \cite{tao_consumption,lankeit_wang}), 
popular in the context of studies concerning the interaction between chemotactically active bacteria and their fluid environment (cf. e.g. \cite{ctnslogsource} and references therein), feature a nonlinear term, that term there only appears as sink, not as source term, thus favourably factoring into boundedness considerations. 

The mathematically most inconvenient difference to the spatially homogeneous setting apparently lies in the chemotaxis term. 
Let us briefly contemplate why in its presence the source term of the second equation seems more troublesome: In the ODE setting, a Lyapunov function has been found (in \cite{korobeinikov}), essentially solving questions of boundedness and long-time behaviour. Attempts to employ a corresponding functional (or even only a functional involving the same term for the first component) will result in the necessity to deal with a term of the form $\io \f{\na u\cdot\na v}u$, which in part can be estimated by the contribution of the diffusion term, but then requires something to cancel $\io |\na v|^2$. This we can easily provide by adding $\io v^2$ to the functional, whereupon the nonlinear production term raises its head as $\io uvw$ (cf. \eqref{lem:est:v:l2}) and can barely be controlled by $-\io v^2$ and an analogue stemming from the third equation, if $u$ is replaced by a bounded function, cf. \cite{tao_bellomo}, (which basically is what the change to this term in \cite{tao_bellomo} does). 
Platz ist, is witnessed by the fact that the proof of \cite[Lemma 4.3]{tao_bellomo} relies on smallness of the reproduction number. 
If we want to retain the factor $u$, i.e. the true nonlinearity of the term $+uw$, which was originally taken from the standard SIR model (cf. \cite[Sec. 2]{stancevic_amh}), however, similar estimates seem no longer possible. 

Moreover, in contrast to the situation of \cite[Lemma 3.3]{tao_bellomo}, the influence of $u$ on the growth of $v$ renders us unable to gain a priori boundedness information on $v$ and $w$ in arbitrary $\Lom p$ spaces as easily as there. 

Nevertheless, if the chemotactic effects are lessened in the way suggested in \cite[Sec. 8]{stancevic_amh} (but less severely -- in the physically relevant dimensions), it is possible to show global existence and boundedness of solutions despite the nonlinear source term. 
More precisely: Considering the system 
\begin{subequations}\label{sys}
 \begin{align}
  u_t&=Δu - ∇\cdot\kl{\f{u}{(1+u)^{α}}∇v} - uw + κ - u, \label{sys:u}\\
  v_t&=Δv + uw - v, \label{sys:v}\\
  w_t&=Δw - w + v,\label{sys:w}\\
  &\delny u\amrand=\delny v\amrand=\delny w\amrand=0,\label{sys:boundary}\\
  &u(\cdot,0)=u_0, \quad v(\cdot,0)=v_0, \quad w(\cdot,0)=w_0\label{sys:init}
 \end{align}
\end{subequations}
with initial data satisfying 
\begin{subequations}\label{init}
 \begin{align}
  &u_0\in C^0(\Ombar),\label{init:u}\\
  &v_0\in W^{1,∞}(\Om),\label{init:v}\\
  &w_0\in C^0(\Ombar),\label{init:w}\\
  &u_0\geq0,\quad  v_0\geq 0, \quad  w_0\geq 0,\label{init:nonneg}
 \end{align}
\end{subequations}
we shall show the following: 
\begin{theorem}\label{thm:gebd}
Let $\Om\sub ℝ^n$, $n\in ℕ$, be a bounded, smooth domain. Let \\
\begin{equation}\label{cond:alpha}
\begin{cases}
\alpha > \f12 + \f{n^2}{6n+4}, &\text{if } \quad 1 \leq n \leq 4 \\[0.25cm]
\alpha > \f{n}4, &\text{if } \quad n \geq 5
\end{cases}
\end{equation}
and let $κ \geq 0$. Then, for every $(u_0,v_0,w_0)$ as in \eqref{init}, the problem \eqref{sys} has a global solution which is bounded in the sense that there exists $C>0$ fufilling
\begin{align}\label{thm:boundednessstatement}
\norm[\Liom]{u(\cdot,t)}+\norm[W^{1,∞}(\Om)]{v(\cdot,t)}+\norm[\Liom]{w(\cdot,t)} \leq C   \qquad \text{for all } t>0.
\end{align}
\end{theorem}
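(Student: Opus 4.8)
The plan is to follow the standard programme for chemotaxis systems: first produce a local solution together with an extensibility criterion, then derive a priori bounds strong enough to preclude blow-up. Local existence of a nonnegative classical solution $(u,v,w)$ on a maximal interval $[0,\Tmax)$, together with the criterion that $\Tmax<\infty$ forces $\norm[\Liom]{u(\cdot,t)}+\norm[W^{1,∞}(\Om)]{v(\cdot,t)}+\norm[\Liom]{w(\cdot,t)}\to\infty$ as $t\nearrow\Tmax$, follows from Amann's fixed-point theory in the usual way. Nonnegativity follows from the parabolic comparison principle, since every reaction term is quasi-positive: the $u$-reaction equals $κ\ge0$ where $u=0$, and the sources $uw$ in \eqref{sys:v} and $v$ in \eqref{sys:w} are nonnegative once $u,v,w\ge0$. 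It then suffices to bound the left-hand side of \eqref{thm:boundednessstatement} on $[0,\Tmax)$ by a constant independent of $\Tmax$.

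First I would record $L^1$-bounds. Integrating \eqref{sys:u} gives $\ddt\io u+\io u+\io uw=κ|\Om|$ and integrating \eqref{sys:v} gives $\ddt\io v+\io v=\io uw$; adding these makes the troublesome production term cancel, so that $\ddt\io(u+v)+\io(u+v)=κ|\Om|$ and hence $\io(u+v)$ stays bounded uniformly in time. Inserting the resulting bound for $\io v$ into the integrated third equation $\ddt\io w+\io w=\io v$ then yields a uniform bound for $\io w$ as well, pinning down all three components in $L^{\infty}((0,\Tmax);\Lom1)$.

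The heart of the matter is a coupled $L^p$-estimate for $u$. Testing \eqref{sys:u} with $u^{p-1}$, the diffusion produces $-\f{4(p-1)}{p^2}\io|\na u^{p/2}|^2$, the absorption terms $-uw$ and $-u$ only help, the production of $u$ contributes a harmless $κ\io u^{p-1}$, and the chemotactic term equals $(p-1)\io\f{u^{p-1}}{(1+u)^{α}}\na u\cdot\na v$. Writing this integrand as $\kl{u^{(p-2)/2}\na u}\cdot\kl{u^{p/2}(1+u)^{-α}\na v}$ and applying Young's inequality absorbs half of the dissipation and leaves $\f{p-1}2\io\f{u^p}{(1+u)^{2α}}|\na v|^2$, whose integrand grows like $u^{p-2α}|\na v|^2$ for large $u$. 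Everything now hinges on this term. Here I would bootstrap gradient regularity for $v$ from its source: \eqref{sys:w} lets $w$ inherit integrability from $v$, and smoothing for the Neumann heat semigroup applied to \eqref{sys:v} then turns integrability of $uw$ into a bound $\na v\in L^{\infty}((0,\Tmax);\Lom{2\sigma})$ for a suitable $\sigma$. With Hölder's inequality and the Gagliardo--Nirenberg inequality---interpolating $\norm[L^{(p-2α)\sigma'}]{u}$ between the dissipation-controlled norm $\norm[L^{pn/(n-2)}]{u}$ and the mass bound $\norm[\Lom1]{u}$---the right-hand side can then be reduced to $\varepsilon\io|\na u^{p/2}|^2$ plus a bounded remainder, \emph{provided} the interpolation exponents are admissible. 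Tracking these admissibility conditions is what produces the threshold \eqref{cond:alpha}, the two regimes reflecting which constraint becomes binding in which range of $n$.

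I expect this third step to be the main obstacle, precisely because the source $uw$ feeds the needed $\na v$-regularity back to the very quantity $\io u^p$ one is trying to bound; the difficulty flagged in the introduction---that a naive $\io v^2$-functional only generates the barely controllable $\io uvw$ (cf. \eqref{lem:est:v:l2})---is the same feedback seen from another angle, and it is the saturation exponent $α$ that must be large enough to break it. Once $\io u^p$ is bounded for a single $p>\max\{1,\f n2\}$, the rest is routine bootstrapping: $u\in L^{\infty}_t\Lom p$ together with the improved $w$-regularity places $uw$ in a good $L^{\infty}_t\Lom q$-space, whence $v\in L^{\infty}_tW^{1,∞}(\Om)$ and $w\in L^{\infty}_t\Liom$ by standard semigroup estimates; with $\na v$ bounded the chemotactic drift in \eqref{sys:u} is subcritical, and a Moser-type iteration upgrades $u$ to $L^{\infty}_t\Liom$. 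These three bounds give \eqref{thm:boundednessstatement} with a constant independent of $\Tmax$, which via the extensibility criterion forces $\Tmax=∞$.
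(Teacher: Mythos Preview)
Your outline is correct up to and including the $L^1$ bounds and the computation that testing \eqref{sys:u} with $u^{p-1}$ leaves the term $\frac{p-1}{2}\io \frac{u^p}{(1+u)^{2\alpha}}|\nabla v|^2$. The gap is in how you propose to control this term. You want to use an \emph{a priori} bound $\nabla v\in L^\infty_t L^{2\sigma}_x$ coming from semigroup smoothing applied to the source $uw$; but at this stage you only know $u\in L^\infty_t\Lom1$ and $w\in L^\infty_t\Lom r$ with $r<\frac{n}{n-2}$, so $uw$ is merely in $L^\infty_t\Lom1$, and the semigroup only yields $\nabla v\in L^\infty_t\Lom q$ for $q<\frac{n}{n-1}$. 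For $n\ge 2$ this is strictly below $L^2$, so $|\nabla v|^2$ is not even in $L^1$ with a useful bound, and the H\"older/Gagliardo--Nirenberg scheme you sketch cannot be started. Improving $\nabla v$ would require improving $uw$, which requires improving $u$ --- precisely the circularity you acknowledge but do not resolve.

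The paper breaks this loop by a different mechanism, and in fact by the very $\io v^2$-functional you set aside. It works with the coupled quantity $\frac1p\io u^p+\frac{p+3}{4}\io v^2+\io|\nabla w|^2$ and, crucially, takes $p\le 2\alpha$ so that $\frac{u^p}{(1+u)^{2\alpha}}\le 1$ and the chemotaxis term is simply $\le \frac{p-1}{4}\io|\nabla v|^2$. This is absorbed by the good term $-\io|\nabla v|^2$ from the $v^2$-estimate; the price is the production term $\io uvw$, but Young's inequality splits it as $\frac{2}{p+3}\io u^p w$ plus terms involving only $\io v^r$ and $\io w^q$. The first piece cancels exactly against the contribution $-\io u^p w$ that the absorption $-uw$ in \eqref{sys:u} produces in the $u^p$-estimate --- this cancellation is the key structural observation you are missing. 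The remaining $\io v^r$ and $\io w^q$ are handled by Gagliardo--Nirenberg interpolation against the known $L^1$ bounds, using $\io|\nabla v|^2$ and $\io|D^2w|^2$ (the latter supplied by the $\io|\nabla w|^2$-estimate). The interpolation for $\io w^q$ needs $q<\frac{2n}{n-2}$ and that for $\io v^r$ needs $r<2+\frac2n$; compatibility of the Young split forces $p>1+\frac{n^2}{3n+2}$. Together with $p\le 2\alpha$ and $p>\frac n2$ (for the final bootstrap), this is exactly what produces the threshold \eqref{cond:alpha}: for $n\le 4$ the constraint $2\alpha>1+\frac{n^2}{3n+2}$ dominates, for $n\ge 5$ the constraint $2\alpha>\frac n2$ does.
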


\begin{remark}
 For simplicity and better readability we have set most coefficients in model \eqref{sys} equal to $1$. Replacing them by other positive constants would not affect the proofs in any major way.  
 Realistic parameter values can be found in \cite[Sec. 9]{stancevic_amh}.
\end{remark}

The proof consists of two main parts, corresponding to Sections \ref{sec:apriori-locex} and \ref{sec:quasienergy}. In the first of these, we will establish local existence of solutions and show that in order to obtain boundedness in the sense of \eqref{thm:boundednessstatement}, it suffices to bound $t\mapsto \norm[\Lom p]{u(\cdot,t)}$ for some $p>\f n2$. The second part will be devoted to the derivation and use of a quasi-energy inequality, resulting in the confirmation that such an $\Lom p$-norm of $u$ can be controlled, which, in light of the first part, will lead to the conclusion of Theorem \ref{thm:gebd}.

\section{Local existence and a priori estimates}\label{sec:apriori-locex}

We postpone a sketch of the proof of local existence until Lemma \ref{lem:improve:locex}, where we will be able to give a more useful extensibility criterion for the solution than would be possible now, and begin the course of our proofs with the following \textit{a priori} estimates.

\subsection{$L^1$-boundedness of $u$}
The first observation in this direction 
is that the amount of healthy cells remains bounded.
\begin{lemma}\label{lem:u:l1}
 Let $\Om\sub ℝ^n$, $n\in ℕ$, be a bounded, smooth domain. Let $α\geq 0$, $κ\geq0$, $T\in(0,∞]$ and let $0\leq u\in C^0(\Ombar\times[0,T))\cap C^{2,1}(\Ombar\times(0,T))$ solve \eqref{sys:u}, \eqref{sys:boundary} for some nonnegative $w\in C^0(\Ombar\times[0,T))$ and some $v\in C^1(\Ombar\times(0,T))\cap C^0(\Ombar\times[0,T))$ satisfying $\delny v\bdryzT=0$.
 Then
 \begin{equation}\label{bd:u:l1}
  \io u(\cdot,t) \leq e^{-t} \io u(\cdot,0) + κ|\Om| \kl{1-e^{-t}} \qquad \text{for all } t\in (0,T).
 \end{equation}
\end{lemma}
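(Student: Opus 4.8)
The plan is to integrate the equation \eqref{sys:u} over $\Om$ and track the evolution of the total mass $\io u(\cdot,t)$. Integrating both sides and using the divergence theorem together with the homogeneous Neumann conditions \eqref{sys:boundary}, the diffusion term $\io \Delta u$ vanishes and, crucially, the chemotaxis term $\io ∇\cdot\bigl(\f{u}{(1+u)^{α}}∇v\bigr)$ also vanishes because its flux through $∂\Om$ is controlled by $\delny v\bdryzT=0$. This is the whole point of the boundary conditions: both divergence-form terms contribute nothing to the mass balance. What remains is
\begin{equation*}
\ddt \io u = -\io uw + κ|\Om| - \io u.
\end{equation*}

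The next step is to discard the unfavourable nonlinear term. Since $u\geq 0$ and $w\geq 0$ by hypothesis, the reaction term $-\io uw$ is nonpositive, so we may drop it to obtain the differential inequality
\begin{equation*}
\ddt \io u \leq κ|\Om| - \io u.
\end{equation*}
Writing $y(t):=\io u(\cdot,t)$, this reads $y'(t)\leq κ|\Om| - y(t)$, a linear first-order differential inequality with constant coefficients.

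Finally I would solve this inequality explicitly. Multiplying by the integrating factor $e^{t}$ gives $\ddt\bigl(e^{t}y(t)\bigr)\leq κ|\Om|\,e^{t}$, and integrating from $0$ to $t$ yields $e^{t}y(t)-y(0)\leq κ|\Om|(e^{t}-1)$, which upon division by $e^{t}$ is exactly the claimed bound \eqref{bd:u:l1}. Equivalently, one may invoke a comparison (Gronwall-type) argument: the right-hand side of \eqref{bd:u:l1} is precisely the solution of the associated ODE $z'=κ|\Om|-z$ with $z(0)=y(0)$, so $y\leq z$ follows.

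The only genuine subtlety, and the step I would be most careful about, is justifying the integrations rigorously under the stated regularity. The assumption $u\in C^{2,1}(\Ombar\times(0,T))$ together with $v\in C^1(\Ombar\times(0,T))$ ensures the integrands are continuous and the divergence theorem applies on each time slice for $t\in(0,T)$; differentiation under the integral sign is likewise valid. Continuity up to $t=0$ via $u\in C^0(\Ombar\times[0,T))$ is what lets us pass to the initial value $\io u(\cdot,0)$ in the limit. None of this is difficult, but it is the point where the precise hypotheses of the lemma are actually used, so it merits explicit mention rather than being taken for granted.
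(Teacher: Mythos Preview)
Your proof is correct and follows essentially the same approach as the paper: integrate \eqref{sys:u} over $\Om$, use the boundary conditions to eliminate the divergence terms, drop the nonpositive term $-\io uw$, and solve the resulting linear differential inequality. The paper's proof is more terse, but the argument is identical.
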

\begin{proof}
 Integrating \eqref{sys:u} and using nonnegativity of $uw$, one obtains
 \[
  \ddt \io u(\cdot,t) \leq κ|\Om|-\io u(\cdot,t)\qquad \text{for all } t\in(0,T)
 \]
 so that \eqref{bd:u:l1} follows.
\end{proof}

\subsection{$L^1$-boundedness of $u+v$ and of $v$}
The next step should be the derivation of similar $L^1$ bounds for $v$. However, the nonlinear production $+uw$ blocks similarly simple approaches as employed in the proof of Lemma \ref{lem:u:l1}. We therefore first turn our attention to the total mass of healthy and infected cells, where the terms modelling the conversion of healthy to infected cells in each of the respective equations cancel each other.

\begin{lemma}\label{lem:uplusv:l1}
 Let $\Om\sub ℝ^n$, $n\in ℕ$, be a bounded, smooth domain. Let $α\geq 0$, $κ\geq 0$, $T\in(0,∞]$ and let the nonnegative functions $u, v\in C^0(\Ombar\times[0,T))\cap C^{2,1}(\Ombar\times(0,T))$ solve \eqref{sys:u}, \eqref{sys:v}, \eqref{sys:boundary} for some function $w$. Then
\begin{equation}\label{eq:uplusv:l1}
 \io u(\cdot,t)+\io v(\cdot,t) = e^{-t} \kl{\io u(\cdot,0)+\io v(\cdot,0)} + κ|\Om| (1-e^{-t}) \qquad \text{for all } t\in (0,T).
\end{equation}
\end{lemma}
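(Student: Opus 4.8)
The plan is to follow exactly the strategy flagged in the paragraph preceding the statement: since the conversion terms $-uw$ in \eqref{sys:u} and $+uw$ in \eqref{sys:v} are equal and opposite, adding the two equations eliminates the troublesome nonlinearity altogether (this is also why no hypothesis on $w$ is needed). Concretely, I would first add \eqref{sys:u} and \eqref{sys:v} pointwise on $\Om\times(0,T)$ to obtain
\[
 (u+v)_t = \Delta(u+v) - \na\cdot\kl{\f{u}{(1+u)^{\alpha}}\na v} + \kappa - (u+v),
\]
where the $\pm uw$ contributions have cancelled.

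Next I would integrate this identity over $\Om$, pulling the time derivative out of the integral (legitimate on the fixed bounded domain by the assumed $C^{2,1}$-regularity, which makes $(u+v)_t$ continuous). The two spatial-derivative terms turn into boundary integrals: by the divergence theorem $\io\Delta(u+v)=\int_{\partial\Om}\delny(u+v)$ vanishes thanks to the homogeneous Neumann conditions in \eqref{sys:boundary}, and likewise $\io\na\cdot\kl{\f{u}{(1+u)^{\alpha}}\na v}=\int_{\partial\Om}\f{u}{(1+u)^{\alpha}}\delny v$ vanishes because $\delny v\bdryzT=0$. Writing $y(t):=\io u(\cdot,t)+\io v(\cdot,t)$, what remains is the linear scalar ODE
\[
 y'(t) = \kappa|\Om| - y(t), \qquad t\in(0,T).
\]
Solving it with the integrating factor $e^{t}$ yields $y(t)=e^{-t}y(0)+\kappa|\Om|(1-e^{-t})$, which is precisely \eqref{eq:uplusv:l1}.

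I do not anticipate a genuine obstacle here: unlike an $L^1$-bound for $v$ alone, where the production term $+uw$ has no matching sink and so forces the analysis into the difficulties described above, the cancellation of the conversion terms upgrades the estimate to an exact mass-balance \emph{identity} rather than a mere inequality. The only two points deserving a line of justification are the differentiation under the integral sign and the application of the divergence theorem to the flux field $\f{u}{(1+u)^{\alpha}}\na v$; both follow immediately from the stated $C^{2,1}$-regularity of $u$ and $v$, since $\f{u}{(1+u)^{\alpha}}$ is then continuously differentiable in space and $\na v$ is continuous up to $\partial\Om$.
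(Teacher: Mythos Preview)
Your proposal is correct and follows exactly the paper's argument: add \eqref{sys:u} and \eqref{sys:v} so that the $\pm uw$ terms cancel, integrate over $\Om$ using the Neumann conditions to kill the divergence terms, and solve the resulting linear ODE $y'=\kappa|\Om|-y$ for $y(t)=\io u+\io v$. The only difference is that you spell out the justification of the boundary integrals and the differentiation under the integral, which the paper leaves implicit.
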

\begin{proof}
 Adding \eqref{sys:u} and \eqref{sys:v} shows that
\[
 (u+v)_t = Δu + Δv - ∇\cdot\kl{\f{u}{(1+u)^{α}}∇v} + κ - (u+v) \qquad \text{in } \Om\times(0,T)
\]
and integration over $\Om$ leads to the ODE $y'(t)=κ|\Om| - y(t)$, $t\in (0,T)$, for $y(t):=\io u(\cdot,t)+\io v(\cdot,t)$, $t\in (0,T)$, immediately resulting in \eqref{eq:uplusv:l1}.
\end{proof}

We now may conclude that also $t\mapsto \io v(\cdot,t)$ is bounded.

\begin{cor}\label{cor:v:l1}
 Let $\Om\sub ℝ^n$, $n\in ℕ$, be a bounded, smooth domain. Let $α\geq 0$, $κ\geq 0$, $T\in(0,∞]$ and let the nonnegative functions $u, v\in C^0(\Ombar\times[0,T))\cap C^{2,1}(\Ombar\times(0,T))$ solve \eqref{sys:u}, \eqref{sys:v}, \eqref{sys:boundary} for some function $w$. Then
 \[
  \io v(\cdot,t) \leq e^{-t} \kl{\io u(\cdot,0)+\io v(\cdot,0)} + κ|\Om| (1-e^{-t}) \qquad \text{for all } t\in (0,T).
 \]
\end{cor}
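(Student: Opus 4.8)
The plan is to deduce the bound on $\io v(\cdot,t)$ directly from the identity already established in Lemma~\ref{lem:uplusv:l1}, using only the nonnegativity of $u$. Since the hypotheses of this corollary are identical to those of Lemma~\ref{lem:uplusv:l1}, that lemma applies verbatim and provides the exact value of $\io u(\cdot,t)+\io v(\cdot,t)$ for all $t\in(0,T)$.

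The single step I would carry out is to drop the nonnegative contribution $\io u(\cdot,t)\geq 0$ from the left-hand side of \eqref{eq:uplusv:l1}. Concretely, for every $t\in(0,T)$ we have
\[
 \io v(\cdot,t) \leq \io u(\cdot,t)+\io v(\cdot,t) = e^{-t} \kl{\io u(\cdot,0)+\io v(\cdot,0)} + κ|\Om| (1-e^{-t}),
\]
where the inequality uses $u\geq 0$ and the equality is precisely \eqref{eq:uplusv:l1}. This is exactly the claimed estimate, so no further work is required.

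There is no real obstacle here: the corollary is a one-line consequence of the preceding lemma, and the nonnegativity of $u$, which is part of the hypotheses, is all that is needed to pass from the equality for the sum to the inequality for $v$ alone. If anything merits a moment of care, it is simply confirming that the regularity and boundary assumptions match those of Lemma~\ref{lem:uplusv:l1} so that the identity may be invoked without modification, but these are assumed to be identical by design.
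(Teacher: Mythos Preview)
Your proof is correct and follows exactly the same approach as the paper: invoke Lemma~\ref{lem:uplusv:l1} and use the nonnegativity of $u$ to drop $\io u(\cdot,t)$ from the left-hand side.
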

\begin{proof}
 Nonnegativity of $u$ and Lemma \ref{lem:uplusv:l1} imply this statement.
\end{proof}

\subsection{Boundedness of $w$ in $L^{\f{n}{n-2}-ε}$}
$L^1$ boundedness of $v$, which serves as the source term in \eqref{sys:w}, already entails some boundedness for the third solution component $w$. As preparation for later arguments, we give the following lemma in a more general form, but note that in light of Corollary \ref{cor:v:l1} can immediately apply it with $q=1$ and hence for any $r\in[1,\f n{n-2})$.

\begin{lemma}\label{lem:improve:w}
 Let $\Om\sub ℝ^n$ be a bounded, smooth domain. Then there is $C>0$ such that for any $q\in[1,\infty)$, 
any $r\in[1,\infty]$ such that $r<\f{nq}{n-2q}$ if $q<\f n2$, $r<\infty$ if $q=\f n2$ or $r\le \infty$ if $q>\f n2$,
 any $T\in(0,∞]$ and any $w\in C^{2,1}(\Ombar\times(0,T))\cap C^0(\Ombar\times[0,T))$ solving \eqref{sys:w}, \eqref{sys:boundary} for some $v\in C^0(\Ombar\times[0,T))\cap C^{2,1}(\Ombar\times(0,T))$ that satisfies
 \[
  \norm[\Lom q]{v(\cdot,t)} \leq K \qquad \text{ for all } t\in(0,T)
 \]
 with some $K>0$,  we have
 \begin{equation}\label{bd:w:n.nminus2}
  \norm[\Lom r]{w(\cdot,t)}\leq C \kl{e^{-t}\norm[\Lom r]{w(\cdot,0)} + K}\qquad \text{for all } t\in (0,T).
 \end{equation}
\end{lemma}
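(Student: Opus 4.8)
The plan is to represent $w$ through the variation-of-constants formula for \eqref{sys:w}, \eqref{sys:boundary} and to combine the smoothing action of the Neumann heat semigroup $(e^{t\Delta})_{t\ge0}$ on $\Om$ with the exponential decay contributed by the zeroth order term $-w$. Since the classical solution $w$ agrees with the associated mild solution, I would start from
\begin{equation}\label{eq:duhamel:w}
 w(\cdot,t)=e^{-t}e^{t\Delta}w(\cdot,0)+\int_0^t e^{-(t-s)}e^{(t-s)\Delta}v(\cdot,s)\,ds, \qquad t\in(0,T),
\end{equation}
where each factor $e^{-(t-s)}$ absorbs the reaction term $-w$. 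Throughout one may assume $q\le r$, since for $r<q$ the assertion follows at once from Hölder's inequality on the bounded domain $\Om$ together with the $\Lom r$-boundedness of the semigroup.

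For the first summand in \eqref{eq:duhamel:w}, the $\Lom r$-boundedness (up to a constant depending only on $n$ and $\Om$) of $e^{t\Delta}$ gives $\norm[\Lom r]{e^{t\Delta}w(\cdot,0)}\le C_1\norm[\Lom r]{w(\cdot,0)}$, so that this part is dominated by $C_1 e^{-t}\norm[\Lom r]{w(\cdot,0)}$, which is exactly the first term on the right of \eqref{bd:w:n.nminus2}; note that $w(\cdot,0)\in C^0(\Ombar)\sub\Lom r$.

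The core of the proof is the Duhamel integral. Using the standard $L^q$–$L^r$ smoothing estimate
\[
 \norm[\Lom r]{e^{σ\Delta}φ}\le C_2\kl{1+σ^{-\f n2(\f1q-\f1r)}}\norm[\Lom q]{φ}, \qquad σ>0,\ 1\le q\le r\le∞,
\]
the hypothesis $\norm[\Lom q]{v(\cdot,s)}\le K$, and the substitution $σ=t-s$, I would estimate the integral term by
\[
 C_2 K\int_0^t e^{-σ}\kl{1+σ^{-β}}\,dσ\le C_2 K\int_0^∞ e^{-σ}\kl{1+σ^{-β}}\,dσ=C_2K\kl{1+\Gamma(1-β)}, \qquad β:=\f n2\kl{\f1q-\f1r}.
\]
The last integral is finite exactly when $β<1$, and this is precisely the restriction imposed on $r$: indeed $β<1\iff \f1q-\f1r<\f2n$, which unravels into the three listed cases, namely $r<\f{nq}{n-2q}$ for $q<\f n2$, arbitrary finite $r$ for $q=\f n2$, and any $r\le∞$ for $q>\f n2$. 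Adding the two contributions then yields \eqref{bd:w:n.nminus2}.

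The main obstacle is precisely the convergence of this temporal integral at $σ=0$: the factor $e^{-σ}$ tames the large-$σ$ behaviour, so everything hinges on the singular factor $σ^{-β}$ being integrable near the origin, i.e. on $β<1$. Some care must be taken with the constant, since $\Gamma(1-β)$ blows up as $β\uparrow1$; accordingly $C$ is to be understood as depending on $n$ and $\Om$ and, through the value of this integral, on the admissible pair $(q,r)$ — a dependence that is immaterial for the intended application with $q=1$ and a fixed $r<\f n{n-2}$ via Corollary \ref{cor:v:l1}.
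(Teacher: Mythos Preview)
Your argument is correct and follows essentially the same route as the paper: the variation-of-constants representation of $w$, the $L^q$--$L^r$ smoothing estimate for the Neumann heat semigroup, and the integrability condition $\tfrac n2(\tfrac1q-\tfrac1r)<1$ for the Duhamel integral. Your remark that the constant $C$ actually depends on the pair $(q,r)$ through $\Gamma(1-\beta)$ is accurate (the paper's own $C$ has the same dependence despite the order of quantifiers in the statement), and your separate treatment of $r<q$ via H\"older is a minor extra care not spelled out in the paper.
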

\begin{proof}
If we represent $w$ according to
\[
 w(\cdot,t)=e^{t(Δ-1)}w(\cdot,0) + \int_0^t e^{(t-s)(Δ-1)}v(\cdot,s) ds, \qquad t\in (0,T),
\]
by the variation-of-constants formula, the well-known $L^p$-$L^q$-estimates for the (Neumann-)heat semigroup (see \cite[Lemma 1.3 i)]{win_aggregationvs}) provide us with a constant $k_1>0$ such that
\begin{align*}
 \norm[\Lom r]{w(\cdot,t)}&\leq k_1 e^{-t} \norm[\Lom r]{w(\cdot,0)} + \int_0^t e^{-(t-s)} k_1 \kl{1+(t-s)^{-\f n2(\f1q-\f1r)}} \norm[\Lom q]{v(\cdot,s)} ds\\
 &\leq C\kl{e^{-t}\norm[\Lom r]{w(\cdot,0)} + K}\qquad \text{for all } t\in(0,T),
\end{align*}
where $C:=\max\{k_1, k_1\int_0^{∞} e^{-τ}(1+τ^{-\f n2(\f1q-\f1r)})dτ\}$, which is finite due to the condition $r<\f {nq}{n-2q}$ ensuring $-\f n2(\f1q-\f1r)>-\f n2(\f1q-\f{n-2q}{nq})=-1$.
\end{proof}

\subsection{Conditional regularity of $v$}
Similarly, it is possible to assert boundedness of the second solution component in smaller spaces than given by Corollary \ref{cor:v:l1} -- provided that boundedness of $w$ and $u$ is already known. Like Lemma \ref{lem:improve:w}, the following lemma will become part of iterative procedures later (in the proofs of Lemma \ref{lem:unifbdness:w} and Lemma \ref{lem:u:bd:if:u:nhalf:and:w:bd}).

\begin{lemma}\label{lem:bd:v}
 Let $\Om\sub ℝ^n$ be a bounded, smooth domain. Let $p\in[1,∞]$, $r\in[1,∞]$ be such that $\f1p+\f1r\leq1$ and let $q_1, q_2\in [1,∞]$ satisfy
 \begin{align}\label{eq:conditionsonq}
  \f1p+\f1r-\f1n&<\f1{q_1}\nn\\
  \f1p+\f1r-\f2n&<\f1{q_2}.
 \end{align}
 Then there is $C>0$ such that whenever for some $T\in(0,∞]$, the function $v\in C^0(\Ombar\times[0,T))\cap C^{2,1}(\Ombar\times(0,T))$ satisfies \eqref{sys:v}, \eqref{sys:boundary} for some $u,w\in C^0(\Ombar\times[0,T))$ satisfying
 \begin{align*}
  \norm[\Lom p]{u(\cdot,t)}\leq K \qquad \text{for all } t\in(0,T)\\
  \norm[\Lom r]{w(\cdot,t)}\leq K \qquad \text{for all } t\in(0,T)
 \end{align*}
 for some $K>0$, then
 \begin{align*}
  \norm[\Lom {q_1}]{∇v(\cdot,t)}&\leq C(e^{-t}\norm[\Lom{q_1}]{∇v(\cdot,0)} + K^2) \\
  \norm[\Lom{q_2}]{v(\cdot,t)}&\leq C(e^{-t}\norm[\Lom{q_2}]{v(\cdot,0)}+K^2).
 \end{align*}
\end{lemma}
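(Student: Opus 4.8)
The plan is to treat \eqref{sys:v} by the variation-of-constants formula
\[
 v(\cdot,t)=e^{t(Δ-1)}v(\cdot,0)+\int_0^t e^{(t-s)(Δ-1)}\,u(\cdot,s)w(\cdot,s)\,ds,\qquad t\in(0,T),
\]
and to estimate the two asserted norms by the smoothing properties of the Neumann heat semigroup, precisely as in the proof of Lemma \ref{lem:improve:w}. The forcing $uw$ is first collected into a single Lebesgue space: since $\f1p+\f1r\le1$, the exponent $\bar s$ defined through $\f1{\bar s}:=\f1p+\f1r$ satisfies $\bar s\in[1,∞]$, and Hölder's inequality yields $\norm[\Lom{\bar s}]{u(\cdot,s)w(\cdot,s)}\le\norm[\Lom p]{u(\cdot,s)}\,\norm[\Lom r]{w(\cdot,s)}\le K^2$ for every $s\in(0,T)$; this is the origin of the factor $K^2$ in both inequalities.

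For the first claim I would apply the gradient estimate for the semigroup, of the form $\norm[\Lom{q_1}]{\na e^{σ(Δ-1)}f}\le c_1 e^{-σ}\kl{1+σ^{-\f12-\f n2(\f1{\bar s}-\f1{q_1})}}\norm[\Lom{\bar s}]{f}$ (from \cite[Lemma 1.3]{win_aggregationvs}), to the integrand with $σ=t-s$ and $f=u(\cdot,s)w(\cdot,s)$, together with $\norm[\Lom{q_1}]{\na e^{t(Δ-1)}v(\cdot,0)}\le c_1 e^{-t}\norm[\Lom{q_1}]{\na v(\cdot,0)}$ for the first summand. After inserting the bound $K^2$, the decisive quantity is $\int_0^t e^{-(t-s)}\kl{1+(t-s)^{-\f12-\f n2(\f1{\bar s}-\f1{q_1})}}\,ds$, which is bounded uniformly in $t$ exactly because the first line of \eqref{eq:conditionsonq}, i.e. $\f1p+\f1r-\f1n<\f1{q_1}$, is equivalent to the singularity exponent satisfying $-\f12-\f n2(\f1{\bar s}-\f1{q_1})>-1$. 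The estimate for $v$ is entirely analogous, now invoking $\norm[\Lom{q_2}]{e^{σ(Δ-1)}f}\le c_2 e^{-σ}\kl{1+σ^{-\f n2(\f1{\bar s}-\f1{q_2})}}\norm[\Lom{\bar s}]{f}$ and $\norm[\Lom{q_2}]{e^{t(Δ-1)}v(\cdot,0)}\le c_2 e^{-t}\norm[\Lom{q_2}]{v(\cdot,0)}$; here integrability of the temporal singularity amounts to $-\f n2(\f1{\bar s}-\f1{q_2})>-1$, which is precisely the second line of \eqref{eq:conditionsonq}. Choosing $C$ as the maximum of the finitely many constants arising then yields both bounds.

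The only point demanding care is the bookkeeping of exponents: the $L^p$-$L^q$ smoothing estimates require the source exponent to be no larger than the target exponent, so in the regime where $q_i<\bar s$ one should first pass through the embedding $\Lom{\bar s}\hookrightarrow\Lom{q_i}$ (valid since $\Om$ is bounded) before invoking \cite[Lemma 1.3]{win_aggregationvs}, and one must verify that the endpoint choices $\bar s=∞$ or $q_i=∞$ remain covered. Because \eqref{eq:conditionsonq} consists of strict inequalities, there is always enough slack to render each application of the smoothing estimates admissible, and the same strictness is what makes the time integrals convergent; hence no genuine obstacle remains once the representation formula and these two estimates are combined.
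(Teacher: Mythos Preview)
Your argument is correct and proceeds exactly as the paper does: introduce $\bar s$ (the paper's $\rho$) via $\f1{\bar s}=\f1p+\f1r$, bound $\norm[\Lom{\bar s}]{uw}\le K^2$ by H\"older, write $v$ via the variation-of-constants formula, and apply the $L^p$--$L^q$ semigroup estimates from \cite[Lemma~1.3]{win_aggregationvs}, the conditions \eqref{eq:conditionsonq} ensuring integrability of the time singularities. Your additional remark on the case $q_i<\bar s$ is a harmless refinement the paper leaves implicit.
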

\begin{proof}
 With $ρ\in[1,∞]$ being defined by $\f1{ρ}=\f1p+\f1r$ we have that $\norm[\Lom{ρ}]{uw(\cdot,t)}\leq K^2$ for any $t\in(0,T)$ and invoking a semigroup representation of $v$ and estimates from \cite[Lemma 1.3]{win_aggregationvs}, we immediately see that for any $t\in(0,T)$ we have
 \[
  \norm[\Lom{q_2}]{v(\cdot,t)}\leq k_1e^{-t}\norm[\Lom{q_2}]{v(\cdot,0)} + k_1K^2\int_0^\infty e^{-τ}\kl{1+τ^{-\f n2(\f1{ρ}-\f1{q_2})}}dτ
 \]
 and that
 \[
  \norm[\Lom{q_1}]{∇v(\cdot,t)}\leq k_1e^{-t}\norm[\Lom{q_1}]{∇v(\cdot,0)} + k_1K^2\int_0^\infty e^{-τ}\kl{1+τ^{-\f12-\f n2(\f1{ρ}-\f1{q_1})}}dτ
 \]
 for any $t\in(0,T)$, where $k_1>0$ is the constant obtained from \cite[Lemma 1.3]{win_aggregationvs} and where the integrals are finite due to \eqref{eq:conditionsonq}.
\end{proof}

\subsection{Boundedness of $w$}
If certain bounds on $u$ are assumed, we are in the following situation: Regularity of $w$ entails regularity of $v$ (according to Lemma \ref{lem:bd:v}), and higher regularity of $v$, in turn, can be used to further our knowledge concerning regularity of $w$ (Lemma \ref{lem:improve:w}). Accordingly, we can iterate application of these two lemmata so as to obtain the following.

\begin{lemma}\label{lem:unifbdness:w}
 Let $\Om\sub ℝ^n$ be a bounded, smooth domain. Let $p\in[1,∞]$, $r\in[1,∞]$ be such that $\f1p+\f1r\leq1$ and let $p>\f n4$. Then for every $K>0$ there is $C>0$ such that the following holds:
 Whenever $T\in(0,∞]$ and $v,w\in C^0(\Ombar\times[0,T))\cap C^{2,1}(\Ombar\times(0,T))$ satisfy \eqref{sys:boundary}, \eqref{sys:v} and \eqref{sys:w} for some $u\in C^0(\Ombar\times[0,T])\cap C^{2,1}(\Ombar\times(0,T))$ and
 \begin{align}\label{eq:conditions:leading:to:better:w}
  \sup_{t\in(0,T)}\norm[\Lom p]{u(\cdot,t)}\leq K,\nn\\
  \sup_{t\in(0,T)}\norm[\Lom r]{w(\cdot,t)}\leq K,\\
  \norm[\Lom\infty]{w(\cdot,0)}\leq K,\nn\\
  \norm[\Lom\infty]{v(\cdot,0)}\leq K\nn
 \end{align}
 are satisfied, then
 \[
  \sup_{t\in(0,T)}\norm[\Lom\infty]{w(\cdot,t)}\leq C.
 \]
\end{lemma}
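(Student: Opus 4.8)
The plan is to run a bootstrap argument that alternately invokes Lemma \ref{lem:bd:v} and Lemma \ref{lem:improve:w}, keeping the $\Lom p$-bound on $u$ fixed while successively improving the Lebesgue exponent in which $w$ is known to be bounded. A preliminary remark simplifies the bookkeeping: the assumptions $\norm[\Liom]{v(\cdot,0)}\le K$ and $\norm[\Liom]{w(\cdot,0)}\le K$ guarantee that the initial-data terms $\norm[\Lom s]{v(\cdot,0)}$ and $\norm[\Lom s]{w(\cdot,0)}$ occurring in the two lemmata are finite (bounded by $|\Om|^{1/s}K$) for \emph{every} exponent $s\in[1,∞]$. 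Hence the only genuinely $t$-dependent information that needs to be propagated are the bounds on $\norm[\Lom p]{u(\cdot,t)}$ and on $\norm[\Lom{r_k}]{w(\cdot,t)}$ along a sequence of exponents $r_k$.

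Quantitatively, one combined step works as follows. Suppose $\sup_t\norm[\Lom{r_k}]{w(\cdot,t)}\le K_k$ is already known for some $r_k\ge r$, so that $\f1p+\f1{r_k}\le\f1p+\f1r\le1$ still holds. Writing $\f1{\rho_k}=\f1p+\f1{r_k}$, Lemma \ref{lem:bd:v} (applied with the common bound $\max\{K,K_k\}$ on $u$ and $w$) yields a uniform-in-$t$ bound on $\norm[\Lom{q_k}]{v(\cdot,t)}$ for any $q_k$ with $\f1{q_k}>\f1{\rho_k}-\f2n$; feeding this $v$-bound into Lemma \ref{lem:improve:w} produces a bound on $\norm[\Lom{r_{k+1}}]{w(\cdot,t)}$ for any $r_{k+1}$ with $\f1{r_{k+1}}>\f1{q_k}-\f2n$. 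Chaining the two inequalities, we may reach any exponent $r_{k+1}$ satisfying
\[
 \f1{r_{k+1}} > \f1{\rho_k}-\f4n = \f1{r_k}+\kl{\f1p-\f4n}.
\]
Because $p>\f n4$, the number $\f4n-\f1p$ is strictly positive, so a single double-step lowers $\f1r$ by essentially this fixed amount.

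I would therefore fix a finite increasing sequence $r=r_0<r_1<\dots<r_N=∞$: as long as $\f1p+\f1{r_k}\ge\f4n$, choose $r_{k+1}$ with $\f1{r_{k+1}}$ just above $\f1{r_k}+(\f1p-\f4n)$, so that $\f1{r_k}$ decreases by almost the fixed positive amount $\f4n-\f1p$ at each stage; after finitely many stages one reaches $\f1p+\f1{r_k}<\f4n$, whereupon a single further double-step (now choosing $q_k$ and then $r_{k+1}=∞$ in the $q_k>\f n2$/$\rho_k>\f n2$ clauses of the two lemmata) yields the desired $L^\infty$-bound on $w$. Applying the combined step inductively, the new bound at each stage is of the schematic form $K_{k+1}=C_k\bigl(K+K_k^2\bigr)$ with a constant $C_k$ furnished by the two lemmata for the concrete pair of exponents chosen; since there are only $N$ stages, the final estimate $\sup_t\norm[\Liom]{w(\cdot,t)}\le C$ is uniform and depends only on $K$, $p$, $r$, $n$ and $\Om$, as claimed.

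The point requiring the most care is the bookkeeping at the thresholds where the hypotheses of the two lemmata change form — namely whether $\rho_k$, respectively $q_k$, lies below, at, or above $\f n2$ — since this dictates the admissible range of the next exponent and thus which clause of Lemma \ref{lem:improve:w} (and which subcritical regime of Lemma \ref{lem:bd:v}) is in force. One must check that the intermediate exponents $q_k$ can always be picked in $[1,∞]$ consistently with \eqref{eq:conditionsonq} and with the inherited constraint $\f1p+\f1{r_k}\le1$, and that the process terminates after finitely many steps; both are consequences of the monotone decrease of $\f1{r_k}$ together with the strict positivity of $\f4n-\f1p$.
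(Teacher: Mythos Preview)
Your proposal is correct and follows essentially the same bootstrap as the paper: alternate Lemma~\ref{lem:bd:v} and Lemma~\ref{lem:improve:w} to gain the fixed increment $\f4n-\f1p>0$ in $\f1{r_k}$ at each double-step, terminating at $r_N=\infty$ after finitely many iterations. The paper's proof is terser (it fixes the concrete choice $\f1{r_{k+1}}=\f1{r_k}-\f12\bigl(\f4n-\f1p\bigr)$ and notes the iteration map has no fixed point), but the underlying argument is the same.
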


\begin{proof}
 By Lemma \ref{lem:bd:v}, we have a bound on $\sup_{t\in(0,T)} \norm[\Lom q]{v(\cdot,t)}$ if $\f1p+\f1r-\f2n<\f1q$. According to Lemma \ref{lem:improve:w} this can be turned into a bound on $\sup_{t\in(0,T)}\norm[\Lom {r'}]{w(\cdot,t)}$ if $\f1{r'}>\f1q-\f2n$. Combining these conditions, we see that \eqref{eq:conditions:leading:to:better:w} entails $\sup_{t\in(0,T)} \norm[\Lom {r'}]{w(\cdot,t)}<\infty$ whenever $\f1{r'}>\f1p+\f1r-\f4n$.
 Since $p>\f n4$, we have that $a:=\f 4n-\f1p$ is positive.
 Setting $r_0:=r$ and \[r_{k+1}:=f(r_k):=\begin{cases}\f{2r_k}{2-ar_k},& r_k<\f2a,\\\infty, & r_k\geq \f2a, \end{cases}\] for $k\inℕ_0$ (meaning that $\f1{r_{k+1}}=(\f1{r_k}-\f a2)>\f1{r_k}+\f1p-\f4n$), upon iteration of the previous argument, we obtain $C_k(K)>0$ such that $\sup_{t\in(0,T)}\norm[\Lom {r_k}]{w(\cdot,t)}\leq C_k(K)$. We finally remark that for some finite $k\inℕ$ we have $r_k=\infty$, because $0<r<f(r)$ for $0<r<\f2a$ and $f$ has no fixed point.
\end{proof}

If we account for the bounds on $w$ that we have prepared previously, this lemma immediately implies boundedness of the third solution component:

\begin{cor}\label{cor:w:bd:if:u:ln2}
 Let $\Om\sub ℝ^n$, $n\in ℕ$, be a bounded, smooth domain and let $K>0$ and $p\in(\f n2,∞]$. Then there is $C>0$ such that the following holds:
 If $α\geq 0$, $κ\geq 0$, $T\in(0,∞]$ and the functions $u, v, w\in C^0(\Ombar\times[0,T))\cap C^{2,1}(\Ombar\times(0,T))$ solve \eqref{sys} with some $(u_0,v_0,w_0)$ as in \eqref{init}, and if
 \[
  \sup_{t\in(0,T)} \norm[\Lom p]{u(\cdot,t)}\leq K, \quad \norm[\Lom\infty]{w_0}\leq K,\quad \norm[\Lom\infty]{v_0}\leq K,
 \]
 then
 \begin{equation}\label{eq:w:bounded}
  \sup_{t\in(0,T)}\norm[\Lom\infty]{w(\cdot,t)}\leq C.
 \end{equation}
\end{cor}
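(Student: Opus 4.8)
The plan is to deduce Corollary \ref{cor:w:bd:if:u:ln2} from Lemma \ref{lem:unifbdness:w} by checking that the hypotheses of the latter can be arranged from the data at hand. The only genuine task is to produce an exponent $r\in[1,∞]$ that simultaneously satisfies the two standing requirements of Lemma \ref{lem:unifbdness:w}, namely $\f1p+\f1r\leq 1$ and $p>\f n4$, together with an a priori $L^r$-bound on $w$; once such an $r$ is in hand, the conclusion \eqref{eq:w:bounded} is precisely the output of that lemma.

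First I would observe that the hypothesis $p>\f n2$ immediately gives $p>\f n4$, so the second structural condition on $p$ in Lemma \ref{lem:unifbdness:w} is automatic. Next I would secure the required $L^r$-bound on $w$ from the earlier a priori estimates: by Corollary \ref{cor:v:l1}, the assumption on the initial data \eqref{init} (which in particular makes $\io u_0+\io v_0$ finite) yields a uniform bound on $\norm[\Lom 1]{v(\cdot,t)}$ on $(0,T)$, and feeding this into Lemma \ref{lem:improve:w} with $q=1$ furnishes a uniform bound on $\norm[\Lom r]{w(\cdot,t)}$ for any $r\in[1,\f{n}{n-2})$ (interpreting this as $r<∞$ when $n=2$ and $r\le∞$ when $n=1$). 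Enlarging $K$ if necessary, I may assume this bound is also $\le K$, so that the second line of \eqref{eq:conditions:leading:to:better:w} holds. The initial-data bounds $\norm[\Liom]{w_0}\le K$ and $\norm[\Liom]{v_0}\le K$ are assumed outright and give the remaining two lines of \eqref{eq:conditions:leading:to:better:w}.

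The one point requiring care is the compatibility condition $\f1p+\f1r\le 1$, since $r$ must be chosen small enough (i.e. $\f1r$ large enough) that $w\in L^r$ is actually known, yet large enough that $\f1r\le 1-\f1p$. Because $p>\f n2$ we have $1-\f1p>1-\f2n=\f{n-2}n$, and the admissible range for the exponent from Lemma \ref{lem:improve:w} is exactly $\f1r>\f{n-2}n$; hence there is a nonempty overlap, and I can pick, for instance, any $r$ with $\max\{1,\f{n}{n-2} \text{-conjugate threshold}\}$ handled by taking $\f1r$ strictly between $\f{n-2}n$ and $1-\f1p$ — concretely $\f1r:=\f12\kl{\f{n-2}n+1-\f1p}$ works, lying in $(\f{n-2}n,1-\f1p)$ and thus giving both an $L^r$-bound on $w$ and $\f1p+\f1r\le 1$. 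I expect this elementary exponent bookkeeping to be the only obstacle; it is the dimensional case distinctions ($n=1,2$ versus $n\ge3$, where $\f n{n-2}$ must be read as $∞$ or omitted) that demand the most attention, after which the statement follows directly by applying Lemma \ref{lem:unifbdness:w} with this $r$.
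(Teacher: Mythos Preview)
Your approach is correct and is essentially the paper's own proof: pick $r\in[1,\tfrac{n}{n-2})$ with $\tfrac1p+\tfrac1r\le 1$ (possible since $p>\tfrac n2$), use Corollary~\ref{cor:v:l1} and Lemma~\ref{lem:improve:w} with $q=1$ to get the $L^r$-bound on $w$, then invoke Lemma~\ref{lem:unifbdness:w}. Two minor remarks: your explicit choice $\tfrac1r=\tfrac12\bigl(\tfrac{n-2}{n}+1-\tfrac1p\bigr)$ is only valid for $n\ge 2$ (for $n=1$ it is negative; there one simply takes $r=\infty$ or notes Lemma~\ref{lem:improve:w} already yields $w\in L^\infty$), and rather than ``enlarging $K$'' you should phrase the $L^r$-bound on $w$ as a new constant $K'=K'(K,u_0,v_0,w_0)$ and apply Lemma~\ref{lem:unifbdness:w} with $K'$ in place of $K$.
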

\begin{proof}
 We pick $r\in[1,\f{n}{n-2})$ such that $\f1r+\f1p\leq 1$, which is possible due to $p>\f n2$. For this $r$, Lemma \ref{lem:improve:w} together with Corollary \ref{cor:v:l1} shows that $\sup_{t\in(0,T)}\norm[\Lom r]{w(\cdot,t)}<\infty$, so that Lemma \ref{lem:unifbdness:w} becomes applicable and guarantees \eqref{eq:w:bounded}.
\end{proof}

\subsection{Improving regularity of $u$}
The previous lemmata require some bounds concerning $u$. Therefore, our next aim shall be the improvement of boundedness properties of said component. In a first step we use assumed boundedness of $\na v$ to procure estimates of norms of $u$.

\begin{lemma}\label{lem:improve:u}
Let $\Om\sub ℝ^n$ be a bounded, smooth domain. Let $α\geq 0$,  $κ\geq 0$ and let $q_0\in[1,∞]$, $p_0\in [1,∞]$ be such that
\begin{equation}\label{cond:p0}
 p_0\geq \f{(1-α)_+}{1-\f1{q_0}}
\end{equation}
is satisfied. Moreover, let $p\in[1,∞]$ be such that
\begin{equation}\label{eq:pcondition}
 \f1{q_0}+\f{(1-α)_+}{p_0}-\f1n < \f1p.
\end{equation}
Then there is $C>0$ such that whenever for some $T\in(0,∞]$, the nonnegative function $u\in C^0(\Ombar\times[0,T))\cap C^{2,1}(\Ombar\times(0,T))$ solves  \eqref{sys:boundary} and \eqref{sys:u} for some $v\in C^1(\Ombar\times(0,T))$ and some nonnegative function $w\in C^0(\Ombar\times[0,T))$
and
the estimates
\begin{align}\label{eq:sup:u:p0:v:q0}
 \sup_{t\in(0,T)} \norm[\Lom{p_0}]{u(\cdot,t)}\leq K\nn\\
 \sup_{t\in(0,T)} \norm[\Lom{q_0}]{∇v(\cdot,t)}\leq K
\end{align}
are fulfilled for any $K>0$ and \eqref{sys:boundary} holds,
then we have
\[
 \norm[\Lom p]{u(\cdot,t)}\leq C\kl{1+K^{1+(1-α)_+}+e^{-t}\norm[\Lom p]{u(\cdot,0)}} \qquad \text{for all } t\in(0,T).
\]
\end{lemma}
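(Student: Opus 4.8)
My plan is to rely on the variation-of-constants representation
\[
 u(\cdot,t)=e^{t(\Delta-1)}u_0-\int_0^t e^{(t-s)(\Delta-1)}\nabla\cdot\kl{\f{u}{(1+u)^{\alpha}}\nabla v}\,ds+\int_0^t e^{(t-s)(\Delta-1)}\kl{\kappa-uw}\,ds
\]
associated with \eqref{sys:u}, \eqref{sys:boundary} (writing $\Delta u-u=(\Delta-1)u$), and to bound the three contributions in $\Lom p$ separately. The first is harmless: since the Neumann heat semigroup contracts every $\Lom p$, we get $\norm[\Lom p]{e^{t(\Delta-1)}u_0}\le e^{-t}\norm[\Lom p]{u_0}$, which already accounts for the term $e^{-t}\norm[\Lom p]{u(\cdot,0)}$ in the claim. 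For the reaction term I would use positivity instead of absolute values: since $u\ge0$, $w\ge0$ and the semigroup preserves positivity, both $u(\cdot,t)$ and $\int_0^t e^{(t-s)(\Delta-1)}uw\,ds$ are nonnegative, whence
\[
 \norm[\Lom p]{u(\cdot,t)}\le\norm[\Lom p]{u(\cdot,t)+\int_0^t e^{(t-s)(\Delta-1)}uw\,ds}=\norm[\Lom p]{e^{t(\Delta-1)}u_0-\int_0^t e^{(t-s)(\Delta-1)}\nabla\cdot\kl{\f{u}{(1+u)^{\alpha}}\nabla v}\,ds+\kappa(1-e^{-t})},
\]
using $e^{(t-s)(\Delta-1)}\kappa=\kappa e^{-(t-s)}$. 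In this way the problematic sink $-uw$ disappears from the upper estimate and contributes only the harmless constant $\kappa(1-e^{-t})\le\kappa$.

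The core of the argument is the chemotaxis integral. First I record the elementary pointwise bound $\f{u}{(1+u)^{\alpha}}\le u^{(1-\alpha)_+}$ (with the convention $u^0=1$ when $\alpha\ge1$), which follows from $(1+u)^{\alpha}\ge\max\{1,u^{\alpha}\}$ for $u\ge0$. Hölder's inequality with $\f1s=\f{(1-\alpha)_+}{p_0}+\f1{q_0}$ then gives
\[
 \norm[\Lom s]{\f{u}{(1+u)^{\alpha}}\nabla v}\le\norm[\Lom{p_0/(1-\alpha)_+}]{u^{(1-\alpha)_+}}\,\norm[\Lom{q_0}]{\nabla v}=\norm[\Lom{p_0}]{u}^{(1-\alpha)_+}\,\norm[\Lom{q_0}]{\nabla v}\le K^{1+(1-\alpha)_+}
\]
by \eqref{eq:sup:u:p0:v:q0} (when $\alpha\ge1$ one simply takes $s=q_0$ and bounds $\f{u}{(1+u)^{\alpha}}\le1$); here condition \eqref{cond:p0} is exactly what ensures $\f1s\le1$, so that the $\Lom s$ norm is meaningful. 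Applying the smoothing estimate for the Neumann heat semigroup acting on a divergence (the gradient estimate of \cite[Lemma 1.3]{win_aggregationvs}), together with the exponential factor from the shift by $-1$, then yields
\[
 \norm[\Lom p]{\int_0^t e^{(t-s)(\Delta-1)}\nabla\cdot\kl{\f{u}{(1+u)^{\alpha}}\nabla v}\,ds}\le k_1K^{1+(1-\alpha)_+}\int_0^\infty\kl{1+\tau^{-\f12-\f n2(\f1s-\f1p)}}e^{-\tau}\,d\tau.
\]

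The one point that needs care is the finiteness of this last time integral, and this is precisely where the hypotheses enter: it converges if and only if $-\f12-\f n2(\f1s-\f1p)>-1$, i.e.\ $\f1s-\f1n<\f1p$, which upon inserting $\f1s=\f{(1-\alpha)_+}{p_0}+\f1{q_0}$ is nothing but \eqref{eq:pcondition}. Collecting the three estimates gives
\[
 \norm[\Lom p]{u(\cdot,t)}\le e^{-t}\norm[\Lom p]{u_0}+CK^{1+(1-\alpha)_+}+\kappa\le C\kl{1+K^{1+(1-\alpha)_+}+e^{-t}\norm[\Lom p]{u(\cdot,0)}},
\]
as claimed. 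Thus the proof is a fairly standard semigroup argument, and I do not expect a single hard conceptual step; the only genuinely delicate points are the positivity manipulation that discards $-uw$ and the exponent bookkeeping that matches the integrability threshold $\f1s-\f1p<\f1n$ with \eqref{eq:pcondition} and the admissibility $s\ge1$ with \eqref{cond:p0}.
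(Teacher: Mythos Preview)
Your proof is correct and follows essentially the same approach as the paper: variation-of-constants for $u$, discarding the $-uw$ term via nonnegativity and order preservation of the semigroup, H\"older on the chemotaxis flux with exponent $\f1s=\f1{q_0}+\f{(1-\alpha)_+}{p_0}$ (where \eqref{cond:p0} ensures $s\ge1$), and then the $L^s$--$L^p$ gradient smoothing estimate whose time-integrability is guaranteed by \eqref{eq:pcondition}. The only cosmetic slip is that the constant contribution should carry a factor $|\Om|^{1/p}$, but this is absorbed in $C$ anyway.
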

\begin{proof}
 The assumption \eqref{cond:p0} ensures that $1\geq \f1{q_0}+\f{(1-α)_+}{p_0}=:\f1r$ so that Hölder's inequality becomes applicable and guarantees that, due to \eqref{eq:sup:u:p0:v:q0},
 \begin{align}\label{eq:est:lr}
  \norm[\Lom r]{\f{u}{(1+u)^{α}}∇v}&\leq \norm[\Lom {q_0}]{∇v}\norm[\Lom {\f{p_0}{(1-α)_+}}]{u^{(1-α)_+}}\nn\\
  &\leq K^{1+(1-α)_+}\qquad\qquad \text{on } (0,T).
 \end{align}
The variation-of-constants representation of $u$ together with nonnegativity of $uw$ and semigroup estimates ensure that with some $k_1>0$, $k_2>0$ taken from \cite[Lemma 1.3]{win_aggregationvs}, we obtain
\begin{align*}
 \norm[\Lom p]{u(\cdot,t)}&\leq k_1 e^{-t}\norm[\Lom p]{u(\cdot,0)} + κ(1-e^{-t})|\Om|^{\f1p}\\&\qquad +k_2K^{1+(1-α)_+} \int_0^{∞} e^{-τ}(1+τ^{-\f12-\f n2(\f1{q_0}+\f{(1+α)_+}{p_0}-\f1p)})dτ \qquad \text{for any } t\in(0,T)
\end{align*}
by \eqref{eq:est:lr}, where the integral is finite due to \eqref{eq:pcondition}.
\end{proof}

\subsection{Boundedness}
It seems to be a shortcoming of Lemma \ref{lem:improve:u} that it has to rely on some known estimates for $\na v$. On the other hand, from Lemma \ref{lem:bd:v} we already know how to obtain those, if we may assume some boundedness of $w$ (which is unproblematic in view of Corollary \ref{cor:w:bd:if:u:ln2}): 

\begin{lemma} \label{lem:u:bd:if:u:nhalf:and:w:bd}
 Let $\Om\sub ℝ^n$, $n\in ℕ$, be a bounded, smooth domain and let $α\geq 0$, $κ\geq 0$. Moreover let $p_0, p\in[1,∞]$ be such that
 \begin{equation}\label{condonp0}
 p_0>\f{1+(1-α)_+}{1+\f1n}
\end{equation}
and
\begin{equation}\label{cond:p0p}
 \f{1+(1-α)_+}{p_0}-\f2n<\f1p,
\end{equation}
and let $q\in[1,∞]$ fulfil $\f1q>\f1p-\f1n$.
Then for every $K>0$ there is $C>0$ such that whenever for some $T\in(0,∞]$, the functions $u,v\in C^{2,1}(\Ombar\times(0,T))\cap C^0(\Ombar\times[0,T))$ solve \eqref{sys:u}, \eqref{sys:v}, \eqref{sys:boundary} for some function $w\in C^0(\Ombar\times[0,T))$ satisfying
\[
 \sup_{t\in(0,T)} \norm[\Lom\infty]{w(\cdot,t)}\leq K
\]
and
\[
 \sup_{t\in(0,T)}\norm[\Lom {p_0}]{u(\cdot,t)}\leq K, \quad  \norm[\Lom p]{u_0}\le K,\quad \norm[\Lom\infty]{∇v_0}\le K
\]
are satisfied, then
\begin{equation}\label{eq:boundinup}
 \sup_{t\in(0,T)} \norm[\Lom p]{u(\cdot,t)}<C
\end{equation}
and
\begin{equation}\label{eq:boundnav}
 \sup_{t\in(0,T)} \norm[\Lom q]{∇ v(\cdot,t)}<C.
\end{equation}
\end{lemma}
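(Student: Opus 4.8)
The plan is to prove both bounds by chaining the two conditional regularity results already established: Lemma \ref{lem:bd:v} converts a bound on $u$ (together with the assumed $\Liom$-bound on $w$) into a bound on $\na v$, while Lemma \ref{lem:improve:u} converts a bound on $\na v$ (together with a bound on $u$) into a sharper bound on $u$. Writing $β:=(1-α)_+$, I expect that a single cycle --- Lemma \ref{lem:bd:v} to pass from $u\in\Lom{p_0}$ to $\na v\in\Lom{q_0}$, then Lemma \ref{lem:improve:u} to pass from there to $u\in\Lom p$, then Lemma \ref{lem:bd:v} once more to pass to $\na v\in\Lom q$ --- already delivers \eqref{eq:boundinup} and \eqref{eq:boundnav}, with no genuine iteration required. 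The point is that the hypotheses \eqref{condonp0} and \eqref{cond:p0p} are exactly the compatibility conditions making the exponents of this single cycle fit together.

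The heart of the matter is choosing the intermediate exponent $q_0$ for $\na v$, and this is where I expect the only real difficulty (a piece of exponent bookkeeping) to lie. For the output $\na v\in\Lom{q_0}$ of Lemma \ref{lem:bd:v} --- applied with its $u$-exponent $p_0$ and its $w$-exponent $\infty$, admissible since $\f1{p_0}\le1$ --- to be usable in Lemma \ref{lem:improve:u} with target exponent $p$, I need $\f1{q_0}$ to meet three requirements at once: $\f1{q_0}>\f1{p_0}-\f1n$ (so that Lemma \ref{lem:bd:v} indeed produces an $\Lom{q_0}$-bound on $\na v$), $\f1{q_0}+\f β{p_0}\le1$ (the admissibility condition \eqref{cond:p0} of Lemma \ref{lem:improve:u}), and $\f1{q_0}+\f β{p_0}-\f1n<\f1p$ (condition \eqref{eq:pcondition}, which secures the target $p$). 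The first and third are jointly compatible precisely when $\f{1+β}{p_0}-\f2n<\f1p$, i.e. \eqref{cond:p0p}, and the first and second precisely when $\f{1+β}{p_0}<1+\f1n$, i.e. \eqref{condonp0}; taking both hypotheses together, the admissible range of $\f1{q_0}$ is a nonempty interval, from which I fix one value of $q_0$.

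With $q_0$ fixed, the remaining argument is routine. I first apply Lemma \ref{lem:bd:v}, with its $u$-, $w$- and $\na v$-exponents taken as $p_0$, $\infty$ and $q_0$, using $\sup_{t}\norm[\Lom{p_0}]{u(\cdot,t)}\le K$, $\sup_{t}\norm[\Liom]{w(\cdot,t)}\le K$ together with $\norm[\Lom{q_0}]{\na v_0}\le|\Om|^{1/q_0}\norm[\Liom]{\na v_0}\le|\Om|^{1/q_0}K$ --- boundedness of $\Om$ turning the given $W^{1,\infty}$-bound on $v_0$ into an $\Lom{q_0}$-bound --- to secure a $t$-uniform bound on $\norm[\Lom{q_0}]{\na v(\cdot,t)}$ depending only on $K$. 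Feeding this, the $\Lom{p_0}$-bound on $u$, and $\norm[\Lom p]{u_0}\le K$ into Lemma \ref{lem:improve:u} yields \eqref{eq:boundinup}. Finally a second application of Lemma \ref{lem:bd:v}, now with $u$-exponent $p$, $w$-exponent $\infty$ and $\na v$-exponent $q$ --- legitimate because $\f1q>\f1p-\f1n$ is assumed --- upgrades the new $\Lom p$-bound on $u$ to the desired $\Lom q$-bound on $\na v$, establishing \eqref{eq:boundnav}. All constants are furnished by the two cited lemmas, and every initial-data norm is controlled by $K$ through boundedness of $\Om$.
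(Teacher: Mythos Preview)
Your proposal is correct and follows essentially the same route as the paper: choose an intermediate exponent $q_0$ satisfying the three constraints $\f1{p_0}-\f1n<\f1{q_0}$, $p_0\ge\f{(1-α)_+}{1-1/q_0}$ and $\f1{q_0}+\f{(1-α)_+}{p_0}-\f1n<\f1p$ (whose joint feasibility is precisely guaranteed by \eqref{condonp0} and \eqref{cond:p0p}), then apply Lemma~\ref{lem:bd:v} with $r=\infty$ followed by Lemma~\ref{lem:improve:u}. Your write-up is in fact a bit more explicit than the paper's in spelling out the second invocation of Lemma~\ref{lem:bd:v} (now with exponent $p$) that upgrades \eqref{eq:boundinup} to the $\na v$-bound \eqref{eq:boundnav}.
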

\begin{proof}
 Since \eqref{condonp0} ensures that $p_0>\f{(1+α)_+}{1-\f1{p_0}+\f1n}$ and since \eqref{cond:p0p} is valid, it is possible to find $q_0\in[1,∞]$ such that $\f1{p_0}-\f1n<\f1{q_0}$ and $p_0\geq\f{(1-α)_+}{1-\f1{q_0}}$ and $\f1{q_0}+\f{(1-α)_+}{p_0}-\f1n<\f1p$. Successive application of Lemma \ref{lem:bd:v} (with $r=\infty$) and Lemma \ref{lem:improve:u} therefore yield \eqref{eq:boundnav} and \eqref{eq:boundinup}. 
\end{proof}

In conclusion, we arrive at the statement that the boundedness of the $\Lom p$-norm of the first component for some $p>\f n2$ already entails boundedness of the solution.



\begin{cor}\label{cor:u:lp:gives:boundedness}
Let $\Om\sub ℝ^n$, $n\in ℕ$, be a bounded, smooth domain and let $α\geq 0$, $κ\geq 0$. Let $p\in[1,∞]$ satisfy
\[
 p>\f n2, \quad p>\f{(1-α)_+n}2, \quad p>\f{1+(1-α)_+}{1+\f1n}.
\]
Then for every $K>0$ there is $C>0$ such that whenever $(u,v,w)\in\kl{C^0(\Ombar\times[0,T))\cap C^{2,1}(\Ombar\times(0,T))}^3$ solves \eqref{sys} for initial data as in \eqref{init} and
\[
 \sup_{t\in(0,T)}\norm[\Lom p]{u(\cdot,t)}\leq K, \qquad \kl{\norm[\Lom\infty]{u_0}+\norm[W^{1,∞}(\Om)]{v_0}+\norm[\Lom\infty]{w_0}}\leq K,
\]
then
\[
 \sup_{t\in(0,T)}\kl{\norm[\Lom\infty]{u(\cdot,t)}+\norm[W^{1,∞}(\Om)]{v(\cdot,t)}+\norm[\Lom\infty]{w(\cdot,t)}}\leq C.
\]
\end{cor}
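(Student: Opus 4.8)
The plan is to chain together the lemmata already established: first pin down the third component, then bootstrap the integrability of the first up to $L^\infty$, and finally recover the regularity of the second. The first step is a single application of Corollary~\ref{cor:w:bd:if:u:ln2}. Since $p>\f n2$ and, by hypothesis, $\sup_{t\in(0,T)}\norm[\Lom p]{u(\cdot,t)}\le K$ as well as $\norm[\Liom]{w_0}\le K$ and $\norm[\Liom]{v_0}\le K$, that corollary directly provides a constant $K_w>0$ with $\sup_{t\in(0,T)}\norm[\Liom]{w(\cdot,t)}\le K_w$. This $L^\infty$-bound on $w$ does not depend on any further knowledge about $u$, and hence remains available throughout the subsequent iteration.

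The core of the argument is a finite bootstrap built on Lemma~\ref{lem:u:bd:if:u:nhalf:and:w:bd}, which (given the $L^\infty$-bound on $w$ just obtained and the admissible initial data) upgrades an $\Lom{p_0}$-bound for $u$ to an $\Lom p$-bound as soon as $\f{1+(1-α)_+}{p_0}-\f2n<\f1p$ together with $p_0>\f{1+(1-α)_+}{1+\f1n}$ hold, cf.\ \eqref{condonp0} and \eqref{cond:p0p}. Writing $b:=1+(1-α)_+$ and tracking integrability through reciprocal exponents, one application carries $\f1{p_0}$ to any value exceeding $\f b{p_0}-\f2n$; picking the new reciprocal exponent just above this threshold yields a strict decrease of size $\f1{p_0}-\kl{\f b{p_0}-\f2n}=\f2n-\f{(1-α)_+}{p_0}$. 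Because $p>\f{(1-α)_+n}2$ forces $\f{(1-α)_+}p<\f2n$, and because the reciprocal exponents only decrease along the iteration (so each stays $\le\f1p$), this gain is bounded below by the fixed positive number $\f2n-\f{(1-α)_+}p$. Starting from $p_0=p$, condition \eqref{condonp0} persists at every stage since the exponents are non-decreasing and $p>\f{1+(1-α)_+}{1+\f1n}$, while the required initial-data bounds hold because $\norm[\Liom]{u_0}\le K$ controls every $\norm[\Lom s]{u_0}$ and $\norm[\Liom]{∇v_0}\le K$. Consequently the reciprocal exponent drops below any positive threshold, and one final application (with target $p=∞$) produces $C_u>0$ with $\sup_{t\in(0,T)}\norm[\Liom]{u(\cdot,t)}\le C_u$.

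Finally, with both $u$ and $w$ bounded in $\Liom$, I would apply Lemma~\ref{lem:bd:v} once more in the borderline case $p=r=∞$: then \eqref{eq:conditionsonq} is satisfied for $q_1=q_2=∞$, which yields bounds on $\sup_{t\in(0,T)}\norm[\Liom]{∇v(\cdot,t)}$ and $\sup_{t\in(0,T)}\norm[\Liom]{v(\cdot,t)}$, and hence on $\norm[W^{1,∞}(\Om)]{v(\cdot,t)}$. Adding the three $L^\infty$-estimates then gives the claimed bound.

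The step demanding the most care is the termination of the bootstrap: one must confirm that the integrability gain per use of Lemma~\ref{lem:u:bd:if:u:nhalf:and:w:bd} is uniformly bounded away from $0$ — which is exactly the role of the hypothesis $p>\f{(1-α)_+n}2$ — and that the structural conditions \eqref{condonp0} and \eqref{cond:p0p} keep holding through the finitely many stages, secured by $p>\f{1+(1-α)_+}{1+\f1n}$ and by the monotonicity of the exponents. The remaining two steps are each a direct, single invocation of an already proven result.
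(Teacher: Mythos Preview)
Your proposal is correct and follows essentially the same route as the paper: first Corollary~\ref{cor:w:bd:if:u:ln2} for $w$, then a finite bootstrap via Lemma~\ref{lem:u:bd:if:u:nhalf:and:w:bd} for $u$ (the paper merely says ``an iterative procedure similar to that in the proof of Lemma~\ref{lem:unifbdness:w}''), with $\nabla v$ picked up along the way. Your explicit tracking of the reciprocal-exponent gain $\tfrac2n-\tfrac{(1-\alpha)_+}{p_0}$ and its uniform lower bound $\tfrac2n-\tfrac{(1-\alpha)_+}{p}>0$ is exactly the detail the paper suppresses; the only cosmetic difference is that you recover the $W^{1,\infty}$-bound on $v$ by a separate final call to Lemma~\ref{lem:bd:v} with $p=r=\infty$, whereas the paper obtains $\nabla v\in L^\infty$ as a byproduct of the last iteration step of Lemma~\ref{lem:u:bd:if:u:nhalf:and:w:bd}.
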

\begin{proof}
 According to Corollary \ref{cor:w:bd:if:u:ln2}, $w$ is bounded in $\Om\times(0,T)$.
 By an iterative procedure similar to that in the proof of Lemma \ref{lem:unifbdness:w}, repeated application of Lemma \ref{lem:u:bd:if:u:nhalf:and:w:bd} shows boundedness of $u$ and of $\nabla v$.
\end{proof}

We close this section with the local existence and extensibility statement we have been working towards:

\begin{lemma}\label{lem:improve:locex}
 Let $\Om\sub ℝ^n$, $n\in ℕ$, be a bounded, smooth domain. Let $α\geq0$, $κ\geq 0$. For every $(u_0,v_0,w_0)$ as in \eqref{init} there are $\Tmax>0$ and a triple of functions
 \begin{align*}
  u&\in C^0(\Ombar\times[0,\Tmax))\cap C^{2,1}(\Ombar\times(0,\Tmax)),\\
  v&\in C^0(\Ombar\times[0,\Tmax))\cap C^{2,1}(\Ombar\times(0,\Tmax)),\\
  w&\in C^0(\Ombar\times[0,\Tmax))\cap C^{2,1}(\Ombar\times(0,\Tmax))
 \end{align*}
 such that $(u,v,w)$ solves \eqref{sys} and that
 \begin{equation}\label{eq:extensibility:better}
  \Tmax=\infty \qquad\text{or}\qquad \lim_{t\nearrow\Tmax} \norm[\Lom p]{u(\cdot,t)}=\infty
 \end{equation}
for any $p>\f n2$ and such that $u$, $v$ and $w$ are nonnegative in $\Om\times(0,\Tmax)$.
\end{lemma}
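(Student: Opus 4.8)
The plan is to prove the two halves of the statement by rather different means: the actual construction of a local classical solution equipped with a crude blow-up criterion, and then the replacement of that criterion by the $L^p$-version \eqref{eq:extensibility:better}, which is the point where the conditional estimates assembled in this section finally get used. First I would set up local existence by a standard fixed-point argument. Because $α\geq0$, the chemotactic sensitivity $s\mapsto\f{s}{(1+s)^{α}}$ is smooth on $[0,∞)$, vanishes at $s=0$ and has bounded derivative, so that \eqref{sys} is a normally parabolic system with locally Lipschitz nonlinearities of the type covered by the usual theory (either Amann's results on quasilinear parabolic systems, or a contraction mapping in $C^0(\Ombar)\times W^{1,∞}(\Om)\times C^0(\Ombar)$ built on the Neumann heat semigroup and its smoothing estimates). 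This yields a maximal existence time $\Tmax\in(0,∞]$, a triple $(u,v,w)$ of the asserted regularity solving \eqref{sys}, and the preliminary dichotomy
\[
 \Tmax=∞\quad\text{or}\quad\limsup_{t\nearrow\Tmax}\kl{\norm[\Liom]{u(\cdot,t)}+\norm[W^{1,∞}(\Om)]{v(\cdot,t)}+\norm[\Liom]{w(\cdot,t)}}=∞.
\]

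Nonnegativity I would read off from the quasi-positive structure of the reaction terms: where $u=0$ the zero-order part of \eqref{sys:u} equals $κ\geq0$, where $v=0$ the source in \eqref{sys:v} equals $uw\geq0$, and where $w=0$ the source in \eqref{sys:w} equals $v\geq0$; together with the diagonal (Neumann-)diffusion this makes the nonnegative cone invariant, which in a fixed-point construction is most cleanly phrased as invariance of that cone under the solution map.

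The core of the proof is to upgrade the preliminary dichotomy to \eqref{eq:extensibility:better}, and here the conditional lemmata do all the work. I would argue by contradiction, assuming $\Tmax<∞$ while $\sup_{t\in(0,\Tmax)}\norm[\Lom p]{u(\cdot,t)}<∞$ for some $p>\f n2$ large enough for Corollary \ref{cor:u:lp:gives:boundedness} to apply. Corollary \ref{cor:w:bd:if:u:ln2} then first supplies a uniform $L^∞$-bound for $w$ on $(0,\Tmax)$, after which Corollary \ref{cor:u:lp:gives:boundedness} converts the assumed $L^p$-control of $u$ into
\[
 \sup_{t\in(0,\Tmax)}\kl{\norm[\Liom]{u(\cdot,t)}+\norm[W^{1,∞}(\Om)]{v(\cdot,t)}+\norm[\Liom]{w(\cdot,t)}}<∞;
\]
the decisive feature of all these estimates is that their constants do not depend on the length of the time interval. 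This contradicts the dichotomy above, so $\sup_{t\in(0,\Tmax)}\norm[\Lom p]{u(\cdot,t)}=∞$ whenever $\Tmax<∞$.

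I expect two technical issues to require the most care. The first is to promote this unboundedness to the genuine limit claimed in \eqref{eq:extensibility:better}; this is a standard but slightly delicate refinement that exploits the uniformity of the estimates in the length of the time interval together with the fact that the local existence time is bounded below in terms of the data norm. The second is the bookkeeping of admissible exponents: Corollary \ref{cor:u:lp:gives:boundedness} requires $p$ beyond the several thresholds listed there, not just $p>\f n2$, and what the argument really establishes—and all that the subsequent boundedness proof needs—is that finiteness of $\sup_{t\in(0,\Tmax)}\norm[\Lom p]{u(\cdot,t)}$ for one such $p$ already rules out blow-up. For the parameter range \eqref{cond:alpha} those thresholds are dominated by the ones already in force (indeed $(1-α)_+$ is then $0$ or small), so that in this range the criterion applies to any $p>\f n2$, which is exactly the form in which it will be invoked afterwards.
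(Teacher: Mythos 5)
Your proposal takes the same route as the paper's own proof: a standard fixed-point construction (the paper cites \cite{horstmann_winkler}) giving local existence, nonnegativity and the crude dichotomy \eqref{eq:extensibility:worse}, followed by an application of Corollary \ref{cor:u:lp:gives:boundedness} (preceded by Corollary \ref{cor:w:bd:if:u:ln2}, which is in fact already invoked inside that corollary) to upgrade the criterion to \eqref{eq:extensibility:better}. Your extra care about which exponents $p>\f n2$ actually meet all the hypotheses of Corollary \ref{cor:u:lp:gives:boundedness} addresses a bookkeeping point the paper's two-line proof glosses over, but it is the same argument.
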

\begin{proof}
 Standard reasoning along the lines of e.g. \cite[Thm. 3.1]{horstmann_winkler}, with Banach's fixed point theorem residing at its core, yields the existence result and shows that 
\begin{equation}\label{eq:extensibility:worse}
  \text{either }\quad \Tmax=\infty \qquad\text{or}\qquad \lim_{t\nearrow\Tmax} \kl{\norm[\Liom]{u(\cdot,t)}+\norm[W^{1,∞}(\Om)]{v(\cdot,t)}+\norm[\Liom]{w(\cdot,t)}}=\infty. 
 \end{equation}
An application of Corollary \ref{cor:u:lp:gives:boundedness} allows us to replace \eqref{eq:extensibility:worse} by \eqref{eq:extensibility:better}.
\end{proof}

\section{Preparations for a quasi-energy inequality}\label{sec:quasienergy}
The main part in the analysis of solutions to \eqref{sys} will be played by a quasi-energy inequality, a differential inequality for the function 
\[
 \io u^p + \io v^2 + \io |\na w|^2. 
\]
In the first subsections, we will, step by step, prepare differential inequalities for the summands separately.


\subsection{Estimating $\io u^p$}
We begin with estimates for a power of $u$. In the end, we will have to use this for some exponent greater than $\f n2$ -- since this is part of the assumptions in Corollary \ref{cor:u:lp:gives:boundedness} and \eqref{eq:extensibility:better}. 
It is apparent, at least for $n>2$, that merely integrating the ODE Lyapunov functional (\cite{korobeinikov}) over the domain could not possibly entail sufficient boundedness information on $u$, -- and in any case this approach would not account for the chemotaxis term, as discussed in the introduction. 

\begin{lemma}
 Let $\Om\sub ℝ^n$, $n\in ℕ$, be a bounded, smooth domain. Let $(u_0,v_0,w_0)$ satisfy \eqref{init}. Suppose that $\alpha \geq \f12$, $k \geq 0$. Then for all $p \in [1,2\alpha]$, there is $C_1>0$ such that the solution of \eqref{sys} satisfies
\begin{equation}\label{lem:pre:u}
\f1p \ddt\io u^p +\f1p \io u^p \leq \f{p-1}4\io |∇v|^2 - \io u^pw  + C_1
\end{equation}
on $(0,\Tmax)$.
\end{lemma}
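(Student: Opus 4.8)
The plan is to test the first equation \eqref{sys:u} against $u^{p-1}$ and integrate over $\Om$, so that $\f1p\ddt\io u^p=\io u^{p-1}u_t$, and then to treat the five resulting contributions separately. After integrating by parts, with the boundary terms vanishing thanks to \eqref{sys:boundary}, the diffusion term yields $\io u^{p-1}\Delta u=-(p-1)\io u^{p-2}|\na u|^2$, while the chemotaxis term produces the cross term $(p-1)\io \f{u^{p-1}}{(1+u)^{\alpha}}\na u\cdot\na v$. The reaction terms are immediate: $-\io u^{p-1}\cdot uw=-\io u^pw$, which is kept as it stands on the right-hand side of \eqref{lem:pre:u}, together with $\kappa\io u^{p-1}$ and $-\io u^p$.

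The heart of the argument is the absorption of the chemotactic cross term. I would first bound it by $(p-1)\io \f{u^{p-1}}{(1+u)^{\alpha}}|\na u|\,|\na v|$ and then split the integrand as the product of $a:=u^{(p-2)/2}|\na u|$ and $b:=\f{u^{p/2}}{(1+u)^{\alpha}}|\na v|$, chosen so that $a^2=u^{p-2}|\na u|^2$ exactly reproduces the diffusion integrand. Young's inequality in the form $ab\le a^2+\f14 b^2$ then gives $(p-1)\io ab\le (p-1)\io u^{p-2}|\na u|^2+\f{p-1}4\io b^2$, and the first summand is precisely cancelled by the diffusion contribution. At this point the hypothesis $p\le 2\alpha$ enters decisively: since $u^p\le(1+u)^p\le(1+u)^{2\alpha}$, we have $\f{u^p}{(1+u)^{2\alpha}}\le 1$, so that $b^2\le|\na v|^2$ and the cross term is controlled by $\f{p-1}4\io|\na v|^2$, exactly the coefficient appearing in \eqref{lem:pre:u}.

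It remains to dispose of the two lower-order reaction terms. I would apply Young's inequality to $\kappa\io u^{p-1}$ in the pointwise form $\kappa u^{p-1}\le\f{p-1}p u^p+C$ valid for $p>1$ (for $p=1$ the term is simply the constant $\kappa|\Om|$), which yields $\kappa\io u^{p-1}-\io u^p\le -\f1p\io u^p+C_1$ with $C_1:=C|\Om|$ in the case $p>1$ and $C_1:=\kappa|\Om|$ in the case $p=1$. Collecting all contributions and transferring the resulting $\f1p\io u^p$ to the left-hand side then produces \eqref{lem:pre:u}.

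The only delicate point is the calibration of the chemotaxis estimate: the cross term must be split so that its gradient-of-$u$ factor is absorbed into the diffusion term with Young constant exactly $1$, and the restriction $p\le2\alpha$ is precisely what keeps the remaining weight $\f{u^p}{(1+u)^{2\alpha}}$ bounded by $1$ and thereby produces the clean factor $\f{p-1}4$ in front of $\io|\na v|^2$. Everything else reduces to routine integration by parts and applications of Young's inequality.
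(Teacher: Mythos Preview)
Your proof is correct and follows essentially the same approach as the paper: test \eqref{sys:u} against $u^{p-1}$, absorb the chemotactic cross term into the diffusion term via Young's inequality so that the factor $\tfrac{u^p}{(1+u)^{2\alpha}}\le 1$ (which is where $p\le 2\alpha$ enters) leaves exactly $\tfrac{p-1}{4}\io|\nabla v|^2$, and then handle $\kappa\io u^{p-1}-\io u^p$ by Young's inequality to produce $-\tfrac1p\io u^p+C_1$ with $C_1=\tfrac{\kappa^p}{p}|\Om|$. The only difference is expository: you spell out the splitting $a=u^{(p-2)/2}|\nabla u|$, $b=\tfrac{u^{p/2}}{(1+u)^{\alpha}}|\nabla v|$ and the case $p=1$ explicitly, whereas the paper states the outcome directly.
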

\begin{proof}
Multiplying equation \eqref{sys:u} by $u^{p-1}$ and integrating over $\Omega$ by parts, using Young's inequality, we deduce that \\
\begin{align*}
 \f1p \ddt\io u^p &= -(p-1) \io u^{p-2} |∇u|^2 + (p-1) \io \f{u^{p-1}}{(1+u)^{α}} ∇u\cdot ∇v \\&+ κ\io u^{p-1} -\io u^p - \io u^pw \\
 &\leq \f{p-1}4\io \f {u^p}{(1+u)^{2\alpha}}|∇v|^2 - \f1p \io u^p -\io u^pw  + C_1\\
 &\leq \f{p-1}4\io |∇v|^2 - \f1p \io u^p -\io u^pw  + C_1\\
\end{align*}
holds on $(0,\Tmax)$, where $\f {u^p}{(1+u)^{2\alpha}} \leq 1$ due to $p \leq 2\alpha$ and where we have set $C_1:=\f{\kappa^p}{p}|Ω|$.\\
\end{proof}

\subsection{Estimating $\io v^2$}
Before we derive a differential inequality for $\io v^2$, in Lemmata \ref{lem:w:lq} and let us prepare estimates that will allow us to transform the term $+\io uvw$ on its right-hand side into terms we can control.
\begin{lemma}\label{lem:w:lq}
Let $\Om\sub ℝ^n$, $n\in ℕ$, be a bounded, smooth domain. Let $(u_0,v_0,w_0)$ satisfy \eqref{init}. Let $q\in(1,\infty]$ be such that $q\le \infty$ if $n=1$, $q<\infty$ if $n=2$ and $q<\f{2n}{n-2}$ if $n\ge 3$.
Then for any $\varepsilon_1 >0$, there exists $C(\varepsilon_1)>0$ such that the solution of \eqref{sys} satisfies
\begin{equation}\label{eq:est:wq}
\io w^q(\cdot,t) \leq \varepsilon_1 \io |D^2w(\cdot,t)|^2 + C(\varepsilon_1)
\end{equation}
for all $t \in(0,\Tmax)$\\
\end{lemma}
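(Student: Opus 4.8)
The plan is to establish \eqref{eq:est:wq} as a pure interpolation estimate: I combine the a priori integrability of $w$ provided by the earlier lemmata with a Gagliardo--Nirenberg inequality involving the Hessian of $w$, and then absorb the resulting power of $\io|D^2w|^2$ through Young's inequality. The whole argument is carried out for each fixed $t\in(0,\Tmax)$.

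First I would record what is already known about $w$. By Corollary \ref{cor:v:l1} the quantity $\io v(\cdot,t)$ is bounded uniformly in $t$, so Lemma \ref{lem:improve:w} (applied with its integrability exponent for $v$ equal to $1$) yields, for every $s\in[1,\f n{n-2})$ when $n\ge3$ and for every finite $s$ when $n\le2$, a constant $C_s>0$ with $\sup_{t\in(0,\Tmax)}\norm[\Lom s]{w(\cdot,t)}\le C_s$; here $\norm[\Lom s]{w_0}<\infty$ because $w_0\in C^0(\Ombar)$. If the prescribed exponent satisfies $q\le s$ for such an admissible $s$ -- which is automatic for $n\le2$, since there $w$ is bounded in every finite $\Lom s$ -- then $\io w^q\le|\Om|^{1-\f qs}\norm[\Lom s]{w}^q\le C$ already gives the claim. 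I may therefore assume $n\ge3$ and, without loss of generality, $\f n{n-2}\le q<\f{2n}{n-2}$.

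For such $q$ I would fix $s<\f n{n-2}$ close to $\f n{n-2}$ (so that in particular $q>s$) and invoke the Gagliardo--Nirenberg inequality on the bounded smooth domain $\Om$ in the form
\[
 \norm[\Lom q]{w}\le C_{GN}\kl{\norm[\Lom2]{D^2w}^{a}\norm[\Lom s]{w}^{1-a}+\norm[\Lom s]{w}},\qquad a=\f{\f1s-\f1q}{\f1s+\f2n-\f12}\in(0,1],
\]
where $a$ is the exponent forced by the scaling identity $\f1q=a\kl{\f12-\f2n}+(1-a)\f1s$. Raising this to the $q$-th power and using $\norm[\Lom s]{w}\le C_s$ reduces the matter to an estimate of the form $\io w^q\le C\kl{(\io|D^2w|^2)^{\f{qa}2}+1}$, and the decisive point is that $\f{qa}2<1$. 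A direct computation shows that at the limiting value $s=\f n{n-2}$ the denominator above equals $\f12$ and hence $qa=\f{2(n-2)}nq-2$, so that $qa<2$ is equivalent to $q<\f{2n}{n-2}$; by continuity this strict inequality persists once $s$ is chosen slightly below $\f n{n-2}$, which is precisely the range in which the $\Lom s$-bound on $w$ is available. Since then $\f{qa}2<1$, Young's inequality gives $C(\io|D^2w|^2)^{\f{qa}2}\le\varepsilon_1\io|D^2w|^2+C(\varepsilon_1)$ for the prescribed $\varepsilon_1>0$, and \eqref{eq:est:wq} follows. The only genuinely delicate step is this exponent bookkeeping: one must verify that the admissible range $q<\f{2n}{n-2}$ is exactly what permits choosing an integrability exponent $s$ just below the a priori threshold $\f n{n-2}$ for $w$ such that the power $\f{qa}2$ of the Hessian norm drops strictly below $1$, which is what makes the absorption by Young's inequality possible; everything else is a routine application of Gagliardo--Nirenberg and Young.
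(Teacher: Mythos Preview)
Your proof is correct and follows essentially the same route as the paper: invoke the a priori $\Lom s$-bound on $w$ from Corollary~\ref{cor:v:l1} and Lemma~\ref{lem:improve:w} for $s<\f n{n-2}$, interpolate via Gagliardo--Nirenberg between this and $\norm[\Lom2]{D^2w}$, and then absorb the subcritical power $\f{qa}2<1$ of the Hessian term by Young's inequality. Your treatment is slightly more explicit in separating the trivial case $q<\f n{n-2}$ and in justifying $qa<2$ through a limiting computation at $s=\f n{n-2}$ plus continuity, but the argument is the same.
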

\begin{proof}
Thanks to Lemma \ref{lem:improve:w} in conjunction with Corollary \ref{cor:v:l1} we know that 
\begin{equation}\label{eq:wbd}
 \sup_{t\in(0,\Tmax)} \io w^{ρ}(\cdot,t) <\infty \qquad \text{for } ρ\in[1,\frac{n}{n-2})
\end{equation}
and hence 
only need to discuss the case of $q \geq \f{n}{n-2}$.
The Gagliardo-Nirenberg inequality provides us with $C_2>0$ such that
\begin{align*}
\io w^q = \norm[\Lom q]{w}^q \leq C_2\norm[\Lom 2]{D^2w}^{aq}\cdot \norm[\Lom s]{w}^{(1-a)q} + C_2\norm[\Lom s]{w}^q  \qquad \text{on }(0,\Tmax)
\end{align*}
where $a=\f {\f 1s - \f 1q}{\f 1s +\f 2n - \f 12} \in (0,1)$ and $s \in (1,\f n{n-2})$. That the exponent $aq$ satisfies $aq<2$ is guaranteed by the restriction $q< \f{2n}{n-2}$. Then, due to \eqref{eq:wbd} with $ρ=s$, the proof is complete after applying Young's inequality.
\end{proof}

\begin{lemma}\label{lem:v:lr}
Let $\Om\sub ℝ^n$, $n\in ℕ$, be a bounded, smooth domain. Let $(u_0,v_0,w_0)$ satisfy \eqref{init}. Let $r\in (1,2+\f2n)$. Then for any $\varepsilon_2 >0$, there exists $C(\varepsilon_2) >0$ such that the solution of \eqref{sys} satisfies
\begin{equation}\label{eq:est:vr}
\io v^r \leq \varepsilon_2 \io |∇v|^2 + C(\varepsilon_2)   \qquad \text{on } (0,\Tmax).
\end{equation}
\end{lemma}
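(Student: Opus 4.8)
The plan is to proceed in complete analogy with the proof of Lemma \ref{lem:w:lq}, now interpolating $\norm[\Lom r]{v}$ between $\norm[\Lom 2]{\na v}$ and a lower norm of $v$ that we already control, and then absorbing the resulting gradient term via Young's inequality. The role played by the $\Lom s$-bound on $w$ in Lemma \ref{lem:w:lq} will here be taken by the uniform $\Lom 1$-bound on $v$: indeed, Corollary \ref{cor:v:l1} guarantees that there is $K>0$ with $\io v(\cdot,t)\le K$ for all $t\in(0,\Tmax)$.

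Concretely, I would invoke the Gagliardo--Nirenberg inequality to obtain a constant $C_3>0$ such that
\begin{equation*}
 \norm[\Lom r]{v} \le C_3\norm[\Lom 2]{\na v}^{a}\,\norm[\Lom 1]{v}^{1-a} + C_3\norm[\Lom 1]{v}\qquad\text{on }(0,\Tmax),
\end{equation*}
where the interpolation exponent $a\in(0,1)$ is fixed by the scaling identity $\f1r=a\kl{\f12-\f1n}+(1-a)$, that is,
\begin{equation*}
 a=\f{2n(r-1)}{r(n+2)}.
\end{equation*}
Since $r>1$ we have $a>0$, and the admissibility $a<1$ follows from $r<2+\f2n$: for $n\ge 3$ one checks that $2+\f2n<\f{2n}{n-2}$, so $r$ stays below the Sobolev exponent, while for $n\le 2$ the embedding $W^{1,2}(\Om)\hookrightarrow \Lom r$ imposes no such constraint. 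Raising the displayed inequality to the $r$-th power and inserting the $\Lom1$-bound $\norm[\Lom 1]{v}\le K$ then yields a bound of the form $\io v^r\le C_4\norm[\Lom2]{\na v}^{ar}+C_4$.

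The one quantitative point to verify — and the heart of the argument — is that the exponent $ar=\f{2n(r-1)}{n+2}$ satisfies $ar<2$; a short computation shows $ar<2\iff (r-1)n<n+2\iff r<2+\f2n$, which is exactly the hypothesis. Consequently Young's inequality applies and gives, for any prescribed $\varepsilon_2>0$, a constant $C(\varepsilon_2)>0$ with $C_4\norm[\Lom2]{\na v}^{ar}\le\varepsilon_2\io|\na v|^2+C(\varepsilon_2)$, whence \eqref{eq:est:vr} follows on $(0,\Tmax)$. I do not expect any genuine obstacle here: once the two admissibility conditions ($a\in(0,1)$ and $ar<2$) are seen to be equivalent to, respectively, the Sobolev range and the explicit bound $r<2+\f2n$, the estimate is a routine application of Gagliardo--Nirenberg together with Young's inequality, mirroring Lemma \ref{lem:w:lq}.
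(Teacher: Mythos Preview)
Your proposal is correct and follows essentially the same route as the paper: Gagliardo--Nirenberg interpolation of $\norm[\Lom r]{v}$ between $\norm[\Lom2]{\na v}$ and the uniform $\Lom1$-bound on $v$ from Corollary \ref{cor:v:l1}, followed by Young's inequality, with the key check $ar<2$ seen to be exactly $r<2+\f2n$. The paper's proof is the same argument in slightly more compressed form (it records the same exponent $a=\f{1-1/r}{1/2+1/n}$ and the same two admissibility conditions).
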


\begin{proof}
We can obtain inequality \eqref{eq:est:vr} in a way very similar to Lemma \ref{lem:w:lq}: We note that the condition on $r$ ensures that $a:=\f{1-\f1r}{\f12+\f1n}$ satisfies $a\in(0,1)$ and $ar<2$ and conclude \eqref{eq:est:vr} from Gagliardo--Nirenberg's inequality showing that with some $C_3>0$ 
\[
 \io v^r=\norm[\Lom r]{v}^r \le C_3\norm[\Lom 2]{\nabla v}^{ar}\norm[\Lom 1]{v}^{(1-a)r} + C_3\norm[\Lom 1]{v}^r \qquad \text{on } (0,\Tmax)
\]
and from an application of Young's inequality together with Lemma \ref{cor:v:l1}. 
\end{proof}

\begin{lemma}
 Let $\Om\sub ℝ^n$, $n\in ℕ$, be a bounded, smooth domain. Let $(u_0,v_0,w_0)$ satisfy \eqref{init}. Let $p>1+\f{n^2}{3n+2}$. There exists $C_4>0$ such that the solution of \eqref{sys} satisfies
\begin{equation}\label{lem:pre:v}
\f12\ddt \io v^2 + \io v^2 + \f12\io |∇v|^2 \leq \f2{p+3} \io u^pw + \f2{p+3}\io |D^2w|^2 + C_4
\end{equation}
on $(0,\Tmax)$\\
\end{lemma}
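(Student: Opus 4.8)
The plan is to test the $v$-equation against $v$ and then treat the resulting reaction integral by a double application of Young's inequality, converting it into the three terms on the right-hand side of \eqref{lem:pre:v}.

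First I would multiply \eqref{sys:v} by $v$ and integrate over $\Om$; since $\delny v=0$ on $\partial\Om$, integration by parts turns $\io v\Delta v$ into $-\io|\na v|^2$, so that
\[
 \f12\ddt\io v^2 + \io v^2 + \io|\na v|^2 = \io uvw \qquad\text{on }(0,\Tmax).
\]
Splitting the dissipation as $\io|\na v|^2=\f12\io|\na v|^2+\f12\io|\na v|^2$ and keeping one half on the left, the whole statement reduces to estimating $\io uvw$ by $\f2{p+3}\io u^pw + \f2{p+3}\io|D^2w|^2 + \f12\io|\na v|^2 + C_4$.

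To do so I would first apply Young's inequality in the form $uvw=(uw^{1/p})\cdot(vw^{1-1/p})\le \delta\,u^pw + C(\delta)\,v^{p'}w$, with $p'=\f p{p-1}$ and $\delta:=\f2{p+3}$; the exponent bookkeeping $(1-\f1p)p'=1$ makes the second factor exactly $v^{p'}w$. A second Young's inequality, with conjugate exponents $\sigma,\sigma'$, then gives $v^{p'}w\le \f1\sigma v^{p'\sigma} + \f1{\sigma'}w^{\sigma'}$. Writing $r:=p'\sigma$ and $q:=\sigma'$, the relation $\f1\sigma+\f1{\sigma'}=1$ becomes $\f{p'}r+\f1q=1$, and it remains to choose $\sigma$ (equivalently $r$) so that $\io v^r$ can be absorbed through Lemma \ref{lem:v:lr} and $\io w^q$ through Lemma \ref{lem:w:lq}. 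This forces $r<2+\f2n$ and, for $n\ge3$, $q<\f{2n}{n-2}$ at the same time.

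The main point---and the only place the hypothesis on $p$ enters---is that such a pair $(r,q)$ exists. Eliminating $q$ via $\f1q=1-\f{p'}r$, the requirement $q<\f{2n}{n-2}$ reads $r>\f{2n}{n+2}p'$, so an admissible $r\in\big(\f{2n}{n+2}p',\,2+\f2n\big)$ exists precisely when $\f{2n}{n+2}p'<2+\f2n$, i.e. when $p'<\f{(n+1)(n+2)}{n^2}$; since $p'=1+\f1{p-1}$, this is equivalent to $p>1+\f{n^2}{3n+2}$, exactly the assumption. Having fixed such $r$ and $q$, I would invoke Lemma \ref{lem:v:lr} to obtain $\io v^r\le\varepsilon_2\io|\na v|^2+C(\varepsilon_2)$ and Lemma \ref{lem:w:lq} to obtain $\io w^q\le\varepsilon_1\io|D^2w|^2+C(\varepsilon_1)$, selecting $\varepsilon_2$ so that $C(\delta)\f1\sigma\varepsilon_2=\f12$ and $\varepsilon_1$ so that $C(\delta)\f1{\sigma'}\varepsilon_1=\f2{p+3}$. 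Collecting the remaining constants into $C_4$ and cancelling the $\f12\io|\na v|^2$ against the half retained on the left then yields \eqref{lem:pre:v}. The cases $n\in\{1,2\}$ are easier, since Lemma \ref{lem:w:lq} then allows all finite $q$ (indeed all $q\le\infty$ when $n=1$), so that only $r<2+\f2n$ restricts the choice.
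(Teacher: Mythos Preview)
Your proposal is correct and follows essentially the same route as the paper: test \eqref{sys:v} against $v$, apply Young's inequality twice to write $\io uvw\le \f2{p+3}\io u^pw + \io w^q + C\io v^r$ with $r=p'\cdot\f{q}{q-1}$, and then invoke Lemmata \ref{lem:w:lq} and \ref{lem:v:lr} after verifying that the exponent constraint is equivalent to $p>1+\f{n^2}{3n+2}$. The only cosmetic difference is bookkeeping of constants (you adjust $\varepsilon_1$ to absorb the prefactor, whereas the paper first normalizes the coefficient of $\io w^q$ to $1$ and then sets $\varepsilon_1=\f2{p+3}$).
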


\begin{proof}
Testing equation \eqref{sys:v} against $v$ and integrating by parts over $\Omega$, we obtain that
\begin{align}\label{lem:est:v:l2}
\f12 \ddt \io v^2 = -\io |∇v|^2  - \io v^2 +\io uvw   \qquad \text{on } (0,\Tmax).
\end{align}
Planning to deal with the last term on the right, we observe that $p>1+\f{n^2}{2n+2}$ entails $\frac{p}{p-1}\cdot \frac{\f{2n}{n-2}}{\f{2n}{n-2}-1} < 2+\f2n$ and fix $q\in(1,\f{2n}{n-2})$ such that $r:=\f{p}{p-1}\cdot\f{q}{q-1}\in (1,2+\f2n)$. 
By twofold application of Young's inequality, 
\begin{align}\label{eq:est:vr&wq}
\io uvw &\leq \f2{p+3} \io u^pw + C_5\io v^\f{p}{p-1}w  \nn \\
        &\leq \f2{p+3} \io u^pw + \io w^q + C_5C_6\io v^r\qquad \text{on } (0,\Tmax), 
\end{align}
where $C_5:=\f {p-1}{p} \cdot (\f{2p}{p+3})^{-\f1{p-1}}$, $C_6:=\f {q-1}{q} \cdot q^{-\f1{q-1}}$.\\
Having chosen $q$ such that $r\in(1+2+\f2n)$, we are able to 
 make use of Lemma \ref{lem:w:lq} and Lemma \ref{lem:v:lr} to estimate the last two terms of \eqref{eq:est:vr&wq}. That, by letting $\varepsilon_1:= \f2{p+3}$ and $\varepsilon_2:= \f1{2C_5C_6}$ and denoting $C_4:=C(\varepsilon_1) + C_5C_6C(\varepsilon_2)$, yields \eqref{lem:pre:v}.
\end{proof}

\subsection{Estimating  $\io |∇w|^2$}
The third ingredient for the final inequality is the following:
\begin{lemma}
Let $\Om\sub ℝ^n$, $n\in ℕ$, be a bounded, smooth domain. Let $(u_0,v_0,w_0)$ satisfy \eqref{init}. There is $C_7>0$ such that the solution of \eqref{sys} satisfies
\begin{equation}\label{lem:pre:w}
\ddt\io|\nabla w|^2 + \io|\nabla w|^2 + \io|D^2w|^2 \leq  \io|\nabla v|^2 +C_7
\end{equation}
on $(0,\Tmax)$
\end{lemma}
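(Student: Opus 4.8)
The plan is to derive a differential inequality for $\io|\na w|^2$ by testing the evolution equation \eqref{sys:w} for $w$ against $-\Delta w$. Formally, multiplying \eqref{sys:w} by $-\Delta w$ and integrating over $\Om$ gives
\[
 -\io w_t\Delta w = -\io|\Delta w|^2 + \io w\Delta w - \io v\Delta w.
\]
After integrating by parts in the time-derivative term, the left-hand side becomes $\f12\ddt\io|\na w|^2$ (the boundary term vanishes thanks to the homogeneous Neumann condition \eqref{sys:boundary}), the term $\io w\Delta w$ integrates by parts to $-\io|\na w|^2$, and the source term $-\io v\Delta w$ integrates by parts to $\io\na v\cdot\na w$. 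This yields
\[
 \f12\ddt\io|\na w|^2 + \io|\na w|^2 + \io|\Delta w|^2 = \io\na v\cdot\na w.
\]

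Next I would convert the $\Delta w$ and $\na w$ norms into the full Hessian norm $\io|D^2w|^2$ appearing in the statement. The key identity is that under homogeneous Neumann boundary conditions on a smooth bounded domain one has $\io|\Delta w|^2 = \io|D^2w|^2 + (\text{boundary curvature term})$, and in the relevant convex case the boundary term is favourably signed; more robustly, the standard elliptic regularity bound $\io|D^2w|^2 \le C(\io|\Delta w|^2 + \io|\na w|^2)$ lets me absorb the Hessian into the left-hand side up to a controllable multiple of $\io|\na w|^2$. I would then treat the right-hand side $\io\na v\cdot\na w$ by Young's inequality, writing $\io\na v\cdot\na w \le \f12\io|\na v|^2 + \f12\io|\na w|^2$, so that the $\f12\io|\na w|^2$ is absorbed into the coefficient of $\io|\na w|^2$ on the left while $\f12\io|\na v|^2$ survives as the driving term on the right. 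Rescaling the resulting inequality (multiplying through by a suitable constant) produces the stated form \eqref{lem:pre:w} with the correct coefficients and some constant $C_7\ge 0$; in fact, if the constants work out cleanly, $C_7$ can even be taken to be $0$, but retaining a generic $C_7>0$ costs nothing and accommodates any curvature contributions from the boundary.

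The main technical obstacle is the relationship between $\io|\Delta w|^2$ and $\io|D^2w|^2$ under Neumann boundary conditions. On a general smooth domain these differ by a boundary integral weighted by the second fundamental form, which need not be nonnegative; so the clean equality $\io|\Delta w|^2 = \io|D^2w|^2$ valid for convex domains is not automatic here. To handle arbitrary smooth $\Om$ I would invoke the $L^2$ elliptic estimate controlling $\io|D^2w|^2$ by $\io|\Delta w|^2 + \io|\na w|^2$ (equivalently a standard pointwise-integrated identity with the boundary term estimated via a trace inequality and interpolation), which suffices because any excess lower-order terms can be absorbed after a further application of the Gagliardo–Nirenberg interpolation of $\io|\na w|^2$ against $\io|D^2w|^2$, exactly as was done for $w$ in Lemma \ref{lem:w:lq}. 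This keeps the inequality self-contained and yields \eqref{lem:pre:w} on all of $(0,\Tmax)$.
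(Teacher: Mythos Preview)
Your proposal is correct and follows essentially the same route as the paper: the paper tests \eqref{sys:w} against $\nabla w$ via the Bochner-type identity $\Delta|\nabla w|^2 = 2\nabla w\cdot\nabla\Delta w + 2|D^2w|^2$, which is the mirror image of your test against $-\Delta w$ followed by the $|\Delta w|^2 \leftrightarrow |D^2w|^2$ conversion, and the same boundary integral $\int_{\partial\Omega}\partial_\nu|\nabla w|^2$ arises either way. The paper then handles that boundary term exactly as you outline --- the pointwise bound $\partial_\nu|\nabla w|^2 \le C|\nabla w|^2$ on $\partial\Omega$, a trace/Ehrling estimate, and finally the Gagliardo--Nirenberg interpolation $\io|\nabla w|^2 \le \varepsilon\io|D^2w|^2 + C(\varepsilon)$ using the a~priori $L^\rho$ bound on $w$ from Lemma~\ref{lem:improve:w} and Corollary~\ref{cor:v:l1} --- so your sketch matches the paper's argument in all essential points.
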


\begin{proof}
Recalling the identity $\Delta |\nabla v|^2=2\nabla v \cdot \nabla \Delta v + 2|D^2v|^2$, using Young's inequality, we differentiate the third equation \eqref{sys:w} to obtain
\begin{align}\label{lem:est:w:test}
\f12 \ddt \io |∇w|^2 &=\io \nabla w \cdot \nabla (\Delta w - w -v) \nn \\
&= \f12 \io Δ|∇w|^2 - \io |D^2w|^2- \io |∇w|^2 - \io ∇v\cdot ∇w  \nn  \\
&\leq \f12 \int_{\partial\Omega} \f{\partial |\nabla w|^2}{\partial \nu}- \io |D^2w|^2- \f12\io |∇w|^2 + \f12 \io |∇v|^2  \qquad \text{on } (0,\Tmax)
\end{align}
Aiming at controlling the boundary integral, we first use the one-sided pointwise inequality
\begin{align}\label{lem:est:w:point}
\f {\partial |\nabla w|^2}{\partial \nu} \leq C_8 |\nabla w|^2 \qquad \text {on } \partial\Om
\end{align}
which with some domain-dependent constant $C_8>0$ holds for any bounded smooth domain $\Om\sub ℝ^n$, $n\in ℕ$, and $w$ satisfying $\f{\partial w}{\partial \nu} =0$ on $\partial \Omega$,(see \cite[Lemma 4.2]{mizoguchi_souplet}). With the help of the embeddings $W^{\f12,2}(\Om)\hookrightarrow L^2(\partial \Omega)$ (see \cite[Thm. 9.4]{lions_magenes}) and $W^ {1,2}(\Omega)\hookrightarrow \hookrightarrow W^{\f12,2}(\Omega)\hookrightarrow L^2(\Omega)$(see \cite[Thm. 16.1]{lions_magenes}) and Ehrling's lemma, for any $\varepsilon_3 >0$ one can find $C_9(\varepsilon_3)>0$ such that
\begin{align*}
\int_{\partial \Omega} \varphi^2 \leq \varepsilon_3 \io |\nabla \varphi|^2 + C_9(\varepsilon_3)\io \varphi^2 \qquad \text{for all } \varphi \in W^ {1,2}(\Omega).
\end{align*}
Applying this to $\varepsilon_3:= \f1{2C_8}$ and $\varphi:=\nabla w$, we deduce that
\begin{align}\label{lem:est:w:bdy}
 \f{C_8}2 \int_{\partial \Omega} |\nabla w|^2 \leq \f14 \io |D^2w|^2 + \f{C_8 C_9(\varepsilon_3)}2 \io |\nabla w|^2\qquad \text{on } (0,\Tmax).
\end{align}

In fact, the term $\io |\nabla w|^2$ in \eqref{lem:est:w:bdy} can be controlled by $\io |D^2w|^2$: Invoking Lemma \ref{lem:improve:w} together with Corollary \ref{cor:v:l1}, the Gagliardo-Nirenberg inequality and Young's inequality, we have that for $\varepsilon_4:=\f1{2 C_8 C_9(\varepsilon_3)}>0$, there exists $C_{10}(\varepsilon_4)$ such that
\begin{align}\label{lem:est:w:eps}
\io |\nabla w|^2 \leq \varepsilon_4 \io |D^2 w|^2 + C_{10}(\varepsilon_4) \qquad \text{on } (0,\Tmax).
\end{align}
Inserting \eqref{lem:est:w:point}, \eqref{lem:est:w:bdy} and \eqref{lem:est:w:eps} into \eqref{lem:est:w:test}, we obtain
\begin{align*}
\f12\ddt\io|\nabla w|^2 \leq &-\f12\io|\nabla w|^2 - \left(1-\f14 -\f{C_8 C_9(\varepsilon_3)\varepsilon_4 }2\right) \io|D^2w|^2 \\ &+ \f12\io|\nabla v|^2 + \f{C_8C_9(\varepsilon_3)C_{10}(\varepsilon_4)}2 \qquad \text{on } (0,\Tmax),
\end{align*}
where 
 denoting $C_{7}:=C_8 C_9(\varepsilon_3) C_{10}(\varepsilon_4)$, we conclude \eqref{lem:pre:w}.
\end{proof}

\subsection{The quasi-energy inequality. Proof of Theorem \ref{thm:gebd}}
Without further ado, let us collect the results of the previous subsections:

\begin{lemma}\label{lem:pre:energy}
Let $\Om\sub ℝ^n$, $n\in ℕ$, be a bounded, smooth domain. Let $(u_0,v_0,w_0)$ satisfy \eqref{init}. Let $2\alpha \geq p > 1+\f{n^2}{3n+2}$. There exists $C_{11}>0$ such that the solution of \eqref{sys} satisfies
\begin{equation}\label{eq:pre:energy}
\ddt\Big\{\f1p \io u^p + \f{p+3}4 \io v^2 + \io |\nabla w|^2 \Big\} + \Big\{\f1p \io u^p + \f{p+3}2 \io v^2 + \io |\nabla w|^2 \Big\} \leq C_{11}
\end{equation}
on $(0,\Tmax)$.
\begin{proof}
 \eqref{eq:pre:energy} results from a simple linear combination of \eqref{lem:pre:u}, \eqref{lem:pre:v}, \eqref{lem:pre:w}, where $C_{11}:= C_1 + C_4\cdot\f{p+3}2 + C_7$.
\end{proof}
\end{lemma}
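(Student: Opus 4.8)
The plan is to obtain \eqref{eq:pre:energy} as an explicit weighted sum of the three differential inequalities \eqref{lem:pre:u}, \eqref{lem:pre:v} and \eqref{lem:pre:w} established in the preceding subsections. The assumption $2\alpha\geq p>1+\f{n^2}{3n+2}$ is exactly what makes all three simultaneously applicable: it forces $p>1$, hence $\alpha>\f12$ and $p\in[1,2\alpha]$, so \eqref{lem:pre:u} holds, while $p>1+\f{n^2}{3n+2}$ is the hypothesis needed for \eqref{lem:pre:v}, and \eqref{lem:pre:w} requires nothing further. Reading off the time-derivative part of the target, the weights are forced: I would add $\eqref{lem:pre:u}$, then $\f{p+3}2\cdot\eqref{lem:pre:v}$, then $\eqref{lem:pre:w}$. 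Indeed $\f{p+3}2\cdot\f12\ddt\io v^2=\f{p+3}4\ddt\io v^2$ reproduces precisely the coefficient demanded in the first brace of \eqref{eq:pre:energy}, while the $u^p$- and $|\na w|^2$-contributions already enter with the correct coefficient $1$. With these weights the left-hand sides assemble exactly into the two bracketed expressions of \eqref{eq:pre:energy}: the time derivatives into $\ddt\{\f1p\io u^p+\f{p+3}4\io v^2+\io|\na w|^2\}$ and the zeroth-order decay terms into $\f1p\io u^p+\f{p+3}2\io v^2+\io|\na w|^2$.

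The substance of the argument — really the only thing needing verification — is that these weights are precisely the ones for which the dangerous right-hand side terms cancel. Three cancellations must occur. First, the nonlinear coupling sink $-\io u^pw$ from \eqref{lem:pre:u} is matched by $\f{p+3}2\cdot\f2{p+3}\io u^pw=\io u^pw$ coming from \eqref{lem:pre:v}, so the troublesome term $\io u^pw$ drops out entirely. Second, the weighted second-derivative term $\f{p+3}2\cdot\f2{p+3}\io|D^2w|^2=\io|D^2w|^2$ appearing on the right of \eqref{lem:pre:v} is absorbed by the $-\io|D^2w|^2$ on the left of \eqref{lem:pre:w}. Third, the gradient terms balance: the right-hand side carries $\f{p-1}4\io|\na v|^2$ from \eqref{lem:pre:u} together with $\io|\na v|^2$ from \eqref{lem:pre:w}, totalling $\f{p+3}4\io|\na v|^2$, which is exactly $\f{p+3}2\cdot\f12\io|\na v|^2$ sitting on the left of the weighted \eqref{lem:pre:v}.

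After these three cancellations nothing survives on the right but the constant $C_1+\f{p+3}2C_4+C_7$, and setting $C_{11}$ equal to this value yields \eqref{eq:pre:energy}. In this sense there is no genuine obstacle at this stage: all the analytic difficulty has been front-loaded into the component lemmas, whose coefficients $\f{p-1}4$ and $\f2{p+3}$ were chosen in advance precisely so that this linear combination closes. The one bookkeeping point I would track with care is that two of the cancellations ($|\na v|^2$ and $|D^2w|^2$) pit a term on the \emph{left} of one inequality against terms on the \emph{right} of others, whereas the $\io u^pw$ cancellation is between two right-hand side terms of opposite sign; keeping the sides and signs straight is all that the verification demands.
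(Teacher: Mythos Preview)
Your proposal is correct and follows exactly the paper's approach: the linear combination $1\cdot\eqref{lem:pre:u}+\f{p+3}2\cdot\eqref{lem:pre:v}+1\cdot\eqref{lem:pre:w}$ with $C_{11}=C_1+\f{p+3}2C_4+C_7$, and the three cancellations you identify (of $\io u^pw$, $\io|D^2w|^2$, and $\f{p+3}4\io|\na v|^2$) are precisely what the paper's one-line proof leaves implicit. Your explicit check that the hypothesis $2\alpha\ge p>1+\f{n^2}{3n+2}$ activates all three component lemmas is a welcome addition.
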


We are now in a position to prove Theorem \ref{thm:gebd}.

\begin{proof}[Proof of Theorem \ref{thm:gebd}]
Upon an ODE comparison principle, \eqref{eq:pre:energy} yields that there exists $C_{12}>0$ such that the solution of \eqref{sys} satisfies
\begin{equation}\label{bd:u:lp}
\io u^p(\cdot,t) \leq C_{12} \qquad \text{for all t } \in (0,\Tmax).
\end{equation}
In view of the extensibility criterion in Lemma \ref{lem:improve:locex}, \eqref{bd:u:lp} asserts that $\Tmax = \infty$. We shall further check the restrictions \eqref{cond:alpha} on $\alpha$, which indeed imply that $2\alpha > \max \{1+\f{n^2}{3n+2},\f{n}2, \f{(1-α)_+n}2, \f{1+(1-α)_+}{1+\f1n}\}$ for $n\in ℕ$. Therefore, it is possile to find some suitable $p$ to make Lemma \ref{lem:pre:energy} and Corollary \ref{cor:u:lp:gives:boundedness} applicable at the same time so that the global boundedness properties therein hold.
\end{proof}


\section*{Acknowledgements}
The authors gratefully acknowledge support of {\em China Scholarship Council} and 
{\em Deut\-scher Aka\-de\-mi\-scher Aus\-tausch\-dienst} 
within the project {\em Qualitative analysis of models for taxis mechanisms}.

{\footnotesize
\bibliographystyle{abbrv}

}
\

\end{document}